
\documentclass[12pt]{article}
\usepackage{graphicx}
\usepackage{amsmath,amsthm,amssymb,enumerate, esint}
\textwidth 18 cm \hoffset - 23 mm
\usepackage{color}
\catcode`\@=11 \@addtoreset{equation}{section}

\catcode`\@=12

\newtheorem{Theorem}{Theorem}[section]
\newtheorem{Proposition}{Proposition}[section]
\newtheorem{Lemma}{Lemma}[section]
\newtheorem{Corollary}{Corollary}[section]
\newtheorem{Definition}{Definition}[section]

\theoremstyle{definition}
\newtheorem{Example}[Theorem]{Example}
\newtheorem{Remark}[Theorem]{Remark}

\newcommand{\bTheorem}[1]{
\begin{Theorem} \label{T#1} }
\newcommand{\eT}{\end{Theorem}}

\newcommand{\bProposition}[1]{
\begin{Proposition} \label{P#1}}
\newcommand{\eP}{\end{Proposition}}

\newcommand{\bLemma}[1]{
\begin{Lemma} \label{L#1} }
\newcommand{\eL}{\end{Lemma}}

\newcommand{\bCorollary}[1]{
\begin{Corollary} \label{C#1} }
\newcommand{\eC}{\end{Corollary}}

\newcommand{\bRemark}[1]{
\begin{Remark} \label{R#1} }
\newcommand{\eR}{\end{Remark}}

\newcommand{\bDefinition}[1]{
\begin{Definition} \label{D#1} }
\newcommand{\eD}{\end{Definition}}

\newcommand{\bFormula}[1]{
\begin{equation} \label{#1}}
\newcommand{\eF}{\end{equation}}

\newcommand{\Divh}{{\rm div}_h}

\newcommand{\Ov}[1]{\overline{#1}}

\newcommand{\DC}{C^\infty_c}

\newcommand{\vr}{\varrho}

\newcommand{\vu}{\vc{u}}
\newcommand{\vc}[1]{{\bf #1}}
\newcommand{\Ome}{\Omega_\ep}
\newcommand{\xh}{\vc{x}_h}
\newcommand{\Div}{{\rm div}_x}
\newcommand{\Grad}{\nabla_x}

\newcommand{\tn}[1]{\mathbb{#1}}
\newcommand{\dx}{{\rm d} {x}}
\newcommand{\dz}{{\rm d} {z}}
\newcommand{\dt}{{\rm d} t }

\newcommand{\intO}[1]{\int_{\Omega} #1 \ \dx}

\newcommand{\intOe}[1]{\int_{\Omega_\ep} #1 \ \dx}

\newcommand{\ep}{\varepsilon}

\newcommand{\R}{\mathbb{R}}

\definecolor{Cgrey}{rgb}{0.85,0.85,0.85}
\definecolor{Cblue}{rgb}{0.50,0.85,0.85}
\definecolor{Cred}{rgb}{1,0,0}
\definecolor{fancy}{rgb}{0.10,0.85,0.10}

\newcommand\Cbox[2]{%
    \newbox\contentbox%
    \newbox\bkgdbox%
    \setbox\contentbox\hbox to \hsize{%
        \vtop{
            \kern\columnsep
            \hbox to \hsize{%
                \kern\columnsep%
                \advance\hsize by -2\columnsep%
                \setlength{\textwidth}{\hsize}%
                \vbox{
                    \parskip=\baselineskip
                    \parindent=0bp
                    #2
                }%
                \kern\columnsep%
            }%
            \kern\columnsep%
        }%
    }%
    \setbox\bkgdbox\vbox{
        \color{#1}
        \hrule width  \wd\contentbox %
               height \ht\contentbox %
               depth  \dp\contentbox
        \color{black}
    }%
    \wd\bkgdbox=0bp%
    \vbox{\hbox to \hsize{\box\bkgdbox\box\contentbox}}%
    \vskip\baselineskip%
}


\date{}


\begin{document}


\title{A rigorous justification of the Euler and Navier-Stokes equations with geometric effects}

\author{Peter Bella \and Eduard Feireisl 
\thanks{The research of E.F.~leading to these results has received funding from the
European Research Council under the European Union's Seventh
Framework Programme (FP7/2007-2013)/ ERC Grant Agreement
320078. The Institute of Mathematics of the Academy of Sciences of
the Czech Republic is supported by RVO:67985840.}
\and Marta Lewicka 
\thanks{M.L. was partially supported by the NSF grant DMS-1406730} 
\and Anton\' \i n Novotn\' y} 

\maketitle

\centerline{Max Planck Institute for Mathematics in the Sciences}

\centerline{Inselstrasse 22, 04103 Leipzig, Germany}

\bigskip

\centerline{Institute of Mathematics of the Academy of Sciences of the Czech Republic}

\centerline{\v Zitn\' a 25, CZ-115 67 Praha 1, Czech Republic}

\bigskip

\centerline{University of Pittsburgh, Department of Mathematics}

\centerline{301 Thackeray Hall, Pittsburgh, PA 15260, USA}

\bigskip

\centerline{Institut Math\'ematiques de Toulon, EA2134, University of Toulon}

\centerline{BP 20132, 839 57 La Garde, France }

\bigskip


\begin{abstract}
We derive the 1D isentropic Euler and Navier-Stokes equations describing
the motion of a gas through a nozzle of variable cross section as the
asymptotic limit of the 3D isentropic Navier-Stokes system in a cylinder, the diameter of
which tends to zero. Our method is based on the relative energy inequality
satisfied by any weak solution of the 3D Navier-Stokes system and a variant of Korn-Poincar\' e's inequality
on thin channels that may be of independent interest.
\end{abstract}

{\bf Keywords:} Isentropic Navier-Stokes system, isentropic Euler system, inviscid limit,
Korn inequality, Poincar\' e inequality


\section{Introduction} \label{i}

A simple model of the flow of a compressible gas through a nozzle of
variable cross section describes the evolution of the mass density
$\vr_E = \vr_E(t,z)$ and the velocity $u_E = u_E(t,z)$ by means of the
Euler system: 
\begin{align}
\label{i1}
\partial_t (\vr_E A) + \partial_z (\vr_E u_E A) &= 0,\\
\label{i2}
\partial_t (\vr_E u_E A) + \partial_z (\vr_E u_E^2 A) + A \partial_z p(\vr_E) &= 0,
\end{align}
where $p = p(\vr_E)$ is the pressure and $A = A(z)$ is the 2D measure
of the cross section at the ``vertical'' position $z$, see e.g., LeFloch and Westdickenberg 
\cite{LeFWes}. We also consider a similar model including the effect
of viscosity with an additional drift term, namely: 
\begin{align}
\label{i1a}
\partial_t (\vr_{NS} A) + \partial_z (\vr_{NS} u_{NS} A) &= 0,\\
\label{i2a}
\partial_t (\vr_{NS} u_{NS} A) + \partial_z (\vr_{NS} u_{NS}^2 A) +
A \partial_z p(\vr_{NS}) &= A \Big( \frac{4 \mu}{3} + \eta\Big) \partial^2_z u_{NS} +
A \left( \frac{\mu}{3} + \eta\right) \partial_z \Big(\frac{\partial_z A}{A} u_{NS}\Big). 
\end{align}

The purpose of this paper is to show that (smooth) solutions of the above problems can be
identified as the asymptotic limits of the 3D Navier-Stokes system: 
\begin{align}\label{i3}
\partial_t \vr + \Div (\vr \vu) &= 0,
\\ \label{i4}
\partial_t (\vr \vu) + \Div (\vr \vu \otimes \vu) + \Grad p(\vr) &= \lambda \Div \tn{S}(\Grad \vu),
\end{align}
\bFormula{i5}
\tn{S} (\Grad \vu) = \mu \left( \Grad \vu + \Grad^t \vu - \frac{2}{3}
  \Div \vu \tn{I} \right) + \eta \Div \vu  \tn{I},\qquad \mu > 0, \ \eta \geq 0, 
\eF
considered in the physical domain:
\bFormula{i6}
\begin{split}
\Ome &= \left\{x= (x_1,x_2, z)
 \equiv (\xh,z)\  \Big|\ z \in (0,1), \ \xh \in \ep\omega_h (z) \right\},
\end{split}
\eF
under the slip boundary conditions:
\bFormula{i7}
\vu \cdot \vc{n}|_{\partial \Omega_\ep} = 0, \qquad
 \left[ \tn{S}(\Grad \vu) \cdot \vc{n} \right] \times \vc{n}|_{\partial \Omega_\ep} = 0,
\eF
provided that $\ep \to 0$. Here, $\{ \omega_h(z) \}_{z \in [0,1]}$ is
a family of sufficiently smooth open bounded simply connected subsets
of $\mathbb{R}^2$, with fairly arbitrary geometry (see Section \ref{p}
for details), where we define: 
\[
A(z) := |\omega_h(z)|.
\]

\medskip

Our approach is based on the concept of \emph{dissipative} weak
solutions to the Navier-Stokes system and the associated relative
energy inequality proved in \cite{FeJiNo}, \cite{FeNoSun} 
(cf. also Germain \cite{Ger}). This method provides an explicit rate of convergence in terms of the initial
data and the parameters $\ep$ and $\lambda$.
Namely, we show that the Euler system
(\ref{i1}), (\ref{i2}) is obtained as the inviscid limit of (\ref{i3}--\ref{i7}) when both $\ep$ and the
positive parameter $\lambda$ in (\ref{i4}) tend to zero. Keeping $\lambda = 1$ we obtain the Navier-Stokes
system (\ref{i1a}), (\ref{i2a}). Note that the dependence on the thin
channels $\Omega_\ep$ cross sections $\ep\omega_h(z)$ in the residual
equations (\ref{i1})-(\ref{i2a}), is manifested solely through the
area $A(z)$, and it is independent of the curvature or other
finer properties of the shape of the boundary.
Strangely enough, the asymptotic analysis is more delicate for the Navier-Stokes limit,
where certain quantities must be controlled by means of a variant of
the celebrated \emph{Korn-Poincar\' e inequality}: 
\begin{equation} \label{korn-poin}
\intOe{ |\vc{v} |^2 } \leq C_{KP} \intOe{ \left| \Grad \vc{v} + \Grad^t \vc{v} \right|^2 }
\end{equation}
to be satisfied, with a constant $C_{KP}$ independent of $\ep \to 0$,
for any vector field $\vc{v}$ such that:
\begin{equation}\nonumber 
\begin{aligned}
\vc{v}(x) \cdot \vc{n} &= 0 \qquad &\forall x = (\xh,z) \in \partial \Ome, \ z \in (0,1),\\
\vc{v}(x) &= 0 \qquad &\forall x = (\xh,z) \in \Ov{\Omega}_\ep, \ z \in \{ 0,1 \}.
\end{aligned}
\end{equation}
Note that since we do not attempt to prove the {\it conformal} version of the
Korn-Poincar\' e inequality, specifically:
\begin{equation}\label{CKP}
\intOe{ |\vc{v} |^2 } \leq C_{\textrm{CKP}} \intOe{ \Bigl| \Grad \vc{v} + \Grad^t \vc{v} - \frac{2}{3} \Div \vc{v} \tn{I} \Bigr|^2 },
\end{equation}
we assume that the bulk viscosity $\eta$ is strictly positive. 

\medskip

The paper is organized as follows. In Section \ref{p}, we recall the
concept of dissipative weak solutions to the Navier-Stokes system
(\ref{i3}--\ref{i5}), (\ref{i7}); state and explain the assumption on
the channel-like domains $\Omega_\ep$ 
and the pressure function $p$; and present the main results concerning
the asymptotic limits. In Section \ref{r}, we introduce the relative entropy inequality and derive the
necessary uniform bounds independent of the parameters $\ep$ and
$\lambda$. The asymptotic limits are performed 
in Section \ref{c}. The paper is concluded by the proof of the
Korn-Poincar\' e inequality (\ref{korn-poin}) in Section 5, together with other
related results and problems that may be of independent interest.

\section{Preliminaries and statements of main results} \label{p}

Similarly to the notation $x = (\xh, z)$, the subscript $h$ used in
the differential operators will refer to the horizontal variables. The
pressure $p = p(\vr)$ is assumed to be a function of the density, and to satisfy:
\begin{equation} \label{press}
\begin{split}
p \in C[0, \infty) \cap C^3(0, \infty), &\qquad  \ p(0) = 0, \qquad  p'(\vr) > 0 \quad
\forall  \vr > 0,\\ &
\mbox{and} \quad \lim_{\vr \to \infty} \frac{p'(\vr)}{\vr^{\gamma - 1}} = p_\infty > 0
\quad  \mbox{for a certain}\ \gamma > \frac{3}{2}.
\end{split}
\end{equation}

\begin{Remark}
The assumption for the pressure to be a strictly increasing function of $\vr$ is indispensable for our results.
The growth restriction imposed through the value of $\gamma$ is required by the available existence
theory for the compressible Navier-Stokes system (\ref{i3}--\ref{i5}).
\end{Remark}

Next, we specify our requirements concerning the geometry of the spatial domains $\Ome$ introduced in (\ref{i6}).
As each $\Ome$ is obtained via a simple scaling, it is convenient to
formulate our hypotheses in terms of the basic domain: 
\[
\Omega = \left\{x= (\xh, z) \ \Big|\ z \in (0,1), \ \xh \in \omega_h(z) \right\}.
\]
Namely, we suppose there is a vector field $\vc{V}_h = \vc{V}_h (\xh, z): \Ov{\Omega} \to \mathbb{R}^2$ such that:
\begin{equation} \label{hyp0}
\begin{split}
& \nabla_h {\rm div}_h \vc{V}_h = 0 \quad \mbox{and} \quad \Delta_h \vc{V}_h = 0
\quad \mbox{in } \Omega;\\
& [\vc{V}_h (\xh, z), 1] \in T_{(\xh, z)}(\partial\Omega)\qquad \forall z\in
(0,1), \ \xh\in\partial\omega_h(z).
\end{split}
\end{equation}
The first condition above means that $ {\rm div}_h \vc{V}_h $ depends
only on the variable $z$, while the last condition states that the
vector field $[\vc{V}_h, 1] \in \mathbb{R}^3$
is tangent to $\partial \Omega$ on the lateral boundary $\partial \Omega \cap \{
0 < z < 1\}$.

\begin{Lemma} \label{rO1}
Assume that the lateral boundary of $\Omega$ is of class $C^{r,\alpha}$
with $r\geq 2, \alpha\in (0,1)$. Then:
\begin{itemize}
\item[(i)] There exists a vector field $\vc{V}_h\in C^{r-1,\alpha}(\Ov{\Omega};\R^2)$ satisfying  (\ref{hyp0}).
\item[(ii)] Let $\phi$ be the flow of $\vc{V}_h$, namely:
$\displaystyle{\frac{\mbox{d}}{\mbox{d} t}\phi(\cdot, t) }= \vc{V}_h(\phi(\cdot,
t),t)$ and $\phi(\cdot, 0) = id_{\omega_h(0)}$.
Then: 
$$\omega_h(z) = \Big\{\phi(\vc{x}_h,z) \ \Big| \
\vc{x}_h\in\omega_h(0)\Big\} \qquad \forall z\in[0,1].$$
\item[(iii)] Recalling that $A(z) = |\omega_h(z)|$, there holds
\begin{equation} \label{g1a}
A(z) {\rm div}_h \vc{V}_h(z) = \partial_z A (z).
\end{equation}
\end{itemize}
\end{Lemma}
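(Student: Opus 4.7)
The plan is to construct $\vc{V}_h$ explicitly in part (i) as a horizontal gradient, after which (ii) and (iii) follow essentially by direct computation.

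For part (i), for each fixed $z \in [0,1]$ I would set $\vc{V}_h(\cdot,z) := \Gradh \Phi(\cdot,z)$, where $\Phi(\cdot,z)$ solves the two-dimensional Neumann problem
\[
\Delta_h \Phi(\cdot,z) = f(z) \text{ in } \omega_h(z), \qquad \Gradh \Phi(\cdot,z) \cdot \vc{n}_h = v_n(\cdot,z) \text{ on } \partial\omega_h(z),
\]
with $v_n = -\partial_z F/|\Gradh F|$ the normal speed of the moving curve $\partial\omega_h(z)$, read off from any local $C^{r,\alpha}$ defining function $F$ of $\partial\Omega$, and with the scalar $f(z)$ still to be chosen. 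The Neumann compatibility condition $\int_{\omega_h(z)} f(z)\,\mathrm{d}\xh = \int_{\partial\omega_h(z)} v_n\,\mathrm{d}\mathcal{H}^1$, combined with the Reynolds transport identity $\int_{\partial\omega_h(z)} v_n\,\mathrm{d}\mathcal{H}^1 = \partial_z A(z)$, forces the choice $f(z) = \partial_z A(z)/A(z)$. With $\Phi$ so constructed, it is immediate that $\Divh \vc{V}_h = \Delta_h \Phi = f(z)$ depends only on $z$, whence $\Gradh \Divh \vc{V}_h = 0$ and $\Delta_h \vc{V}_h = \Gradh f = 0$; moreover the Neumann boundary condition is precisely the tangency of $[\vc{V}_h, 1]$ to $\partial\Omega$ on the lateral boundary, so both requirements in (\ref{hyp0}) hold.

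The regularity claim $\vc{V}_h \in C^{r-1,\alpha}(\Ov{\Omega};\R^2)$ is classical Schauder theory. For each fixed $z$ a $C^{r,\alpha}$ boundary of $\omega_h(z)$ and a Neumann datum $v_n \in C^{r-1,\alpha}(\partial\omega_h(z))$ give $\Phi(\cdot,z) \in C^{r,\alpha}(\Ov{\omega_h(z)})$, hence $\vc{V}_h(\cdot,z) \in C^{r-1,\alpha}$. To upgrade this to joint regularity in $(\xh,z)$ I would flatten the $z$-dependent family of cross sections onto the reference $\omega_h(0)$ by a $C^{r,\alpha}$ diffeomorphism built from a tubular neighborhood of the lateral boundary, rewrite the Neumann problem as a uniformly elliptic one on the fixed domain with $C^{r-1,\alpha}$-in-$(\xh,z)$ coefficients, and invoke Schauder estimates with parameters.

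For (ii), the tangency relation in (\ref{hyp0}) asserts that the three-dimensional vector field $[\vc{V}_h(\xh,z),1]$ is tangent to the lateral boundary of $\Omega$, so its flow $(\xh,0)\mapsto(\phi(\xh,z),z)$ preserves that boundary and therefore $\phi(\cdot,z)$ maps $\omega_h(0)$ into $\omega_h(z)$; running the flow backward from time $z$ yields the reverse inclusion, and hence $\phi(\cdot,z)\colon \omega_h(0) \to \omega_h(z)$ is a diffeomorphism. For (iii), the change-of-variables formula gives $A(z) = \int_{\omega_h(0)} \det D_h\phi(\cdot,z)\,\mathrm{d}\xh$, and the classical Jacobian ODE $\partial_z \det D_h\phi = (\Divh \vc{V}_h)(\phi,z)\,\det D_h\phi$ combined with the fact from (i) that $\Divh \vc{V}_h = f(z)$ is independent of $\xh$ produces $\partial_z A(z) = f(z)\,A(z) = \Divh \vc{V}_h(z)\,A(z)$, which is exactly (\ref{g1a}). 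The genuinely delicate point in this whole scheme is the joint $(\xh,z)$-regularity up to the lateral boundary: because the Neumann problem lives on a $z$-dependent domain, the Schauder machinery must be applied after a smooth flattening, with careful tracking of the regularity of the transformed coefficients.
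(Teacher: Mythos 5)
Your proposal is correct and follows essentially the same route as the paper: you construct $\vc{V}_h$ as the horizontal gradient of a solution to a Neumann problem on each slice $\omega_h(z)$, with source $\partial_z A(z)/A(z)$ and Neumann datum the normal velocity of the moving cross section, so that the two conditions in (\ref{hyp0}) hold by design, (ii) follows from tangency of $[\vc{V}_h,1]$ to the lateral boundary, and (iii) from the Jacobian ODE with $\Divh\vc{V}_h$ independent of $\xh$. The only cosmetic difference is that the paper first builds an interior extension $\vc{w}_h$ of the boundary normal-velocity field and uses its flow $\tilde\phi$ both to establish the Reynolds-type identity $\partial_z A = \int_{\partial\omega_h(z)}\vc{w}_h\cdot\vc{n}_h$ and as the domain-flattening map in the joint-regularity step, whereas you read $v_n$ off a defining function and propose a tubular-neighborhood flattening; the underlying computations are the same.
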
 
\begin{proof}
{\bf 1.} To prove (i), we first define a vector field $\vc{w}_h \in \mathbb{R}^2$ 
on the lateral boundary of $\Omega$, through the following two conditions:
\[
\begin{split}
&\vc{w}_h(\xh, z) \ \mbox{is parallel to the normal vector}\ \vc{n}_h
\ \mbox{to $\omega_h(z)$ at}\ \xh \in \partial \omega_h(z); \\
&\mbox{the vector} \ [ \vc{w}_h(\xh, z), 1 ] \ \mbox{is tangent to}
\ \partial \Omega \ \mbox{at}\ (\xh, z).
\end{split}
\]
Let now $\vc{w}_h=\vc{w}_h(\xh, z)\in\mathbb{R}^2$ be any extension of
$\vc{w}_h$ on $\Ov{\Omega}$, of regularity $C^{r-1, \alpha}$, and denote
$\tilde{\vc{X}} = [\vc{w}_h,1]\in\mathbb{R}^3$ the vector field on
$\Ov{\Omega}$, whose flow $\tilde\Phi$ describes the evolution of the
cross sections $z\mapsto\omega_h(z)$. Namely:
$$\displaystyle{\frac{\mbox{d}}{\mbox{d} t}\tilde\Phi(\cdot, t) }= \tilde{\vc{X}}(\tilde\Phi(\cdot,
t),t), \qquad \tilde\Phi(\cdot, 0) = id_{\omega_h(0)}$$
and we have:
\begin{equation}\label{pome0}
\Big\{\tilde\Phi\big(\vc{x}_h,z) \ \Big| \ \vc{x}_h\in\omega_h(0)\Big\}
= \Big\{\tilde\phi(\vc{x}_h,z) \ \Big| \ \vc{x}_h\in\omega_h(0)\Big\}
\times \{z\} = \omega_h(z)\times\{z\},
\end{equation}
where $\tilde \phi$ is the flow of $\vc{w}_h$, so that:
$$\displaystyle{\frac{\mbox{d}}{\mbox{d} t}\tilde\phi(\cdot, t) }= {\vc{w}_h}(\tilde\phi(\cdot,
t),t), \qquad \tilde\phi(\cdot, 0) = id_{\omega_h(0)}.$$
By a change of variables, we now obtain:
\begin{equation}\label{pome}
\begin{split}
\partial_z A(z) & = \partial_z\Big(\int_{\omega_h(0)} \mbox{det}\nabla_h\tilde\phi(\xh,z)~\mbox{d}\xh\Big)
= \int_{\omega_h(0)} \partial_z\big(\mbox{det}\nabla_h\tilde\phi(\xh,z)\big)~\mbox{d}\xh
\\ & = \int_{\omega_h(0)}
\big(\mbox{det}\nabla_h\tilde\phi(\xh,z)\big)\Big(\mbox{div}_h\vc{w}_h(\tilde\phi(\xh,
z), z)\Big)~\mbox{d}\xh \\ & = \int_{\omega_h(z)} \mbox{div}_h\vc{w}_h(\xh, z)~\mbox{d}\xh = \int_{\partial\omega_h(z)} \vc{w}_h\cdot\vc{n}_h.
\end{split}
\end{equation}

{\bf 2.} Next, we define $U_h= U_h(\xh,z)\in\mathbb{R}$ to be the unique solution of the Neumann problem:
\begin{equation}\label{pome2}
\begin{split}
\Delta_h U_h(\xh,z) = \frac{\partial_z A(z)}{A(z)} \quad \mbox{in}\ \omega_h(z), 
\qquad 
\Grad U_h(\xh,z) \cdot \vc{n}_h = \vc{w}_h (\xh,z) \cdot \vc{n}_h \quad \mbox{on} \ \partial \omega_h(z). 
\end{split}
\end{equation}
This problem has a solution $U_h\in C^{r-1, \alpha}$ enjoying
``horizontal'' regularity $U_h\in C^{r, \alpha}(\omega_h(z))$ because
of the compatibility in: $\int_{\omega_h(z)} \frac{\partial_z A(z)}{A(z)}
~\mbox{d}\xh = \partial_zA(z) = \int_{\partial\omega_h(z)}
\vc{w}_h\cdot\vc{n}_h$, valid in view of (\ref{pome}). 
The desired vector field $\vc{V}_h$ can then be taken as:
\[
\vc{V}_h (\xh, z) = \nabla_h U_h (\xh, z) \quad \mbox{ in } \Omega.
\]
Clearly, $\mbox{div}_h\vc{V}_h = \Delta_hU_h$ is constant in
$\omega_h(z)$ and $\Delta_h\vc{V}_h = \nabla_h\Delta_hU_h=0$ by
(\ref{pome2}). Moreover, on the lateral boundary of $\Omega$, the vector fields 
$\vc{V}_h$ and $\vc{w}_h$ differ by a vector tangent to
$\partial\omega_h(z)$. Therefore $\vc{V}_h$ satisfies (\ref{hyp0}),
which achieves (i). We also automatically obtain (ii), by the same
reasoning as in (\ref{pome0}).
Finally, applying (\ref{pome}) where $\phi$ replaces $\tilde\phi$ and
$\vc{V}_h$ replaces $\vc{w}_h$, we get (iii): 
\begin{equation*}
\partial_z A (z) = \int_{\omega_h(z) } {\rm div}_h \vc{V}_h(\xh, z) \ {\rm d}\xh 
= A(z) {\rm div}_h \vc{V}_h(z).
\end{equation*}

{\bf 3.} To finish the proof, we establish regularity of
the field $\vc{V}_h(\xh,z)$ with respect to the ``vertical'' variable
$z$. To this end, we pull back the boundary problem (\ref{pome2}) to the fixed domain $\omega_h(0)$:
\[
\begin{split}
{\rm div}_h \left( \mathbb{B}(\xh,z) \nabla_h \tilde U_h (\xh,z)
\right) &= \Big(\mbox{det} \nabla_h \tilde \phi (\xh, z) \Big) \frac{\partial_z
  A(z)}{A(z)} \qquad \mbox{in}\ \omega_h(0),\\ 
\nabla_h \tilde{U}_h (\xh, z) \cdot \tilde{{\bf n}}_h (\xh, z) &= \tilde{{\bf
    w}}_h(\xh, z) \cdot {\bf n}_h(\tilde\phi(\xh, z)) \qquad \mbox{on}\ \partial \omega_h(0), 
\end{split} 
\]
where:
\[
\begin{split}
\tilde{U}_h(\xh, z) &= U_h(\tilde{\phi}(\xh, z), z), \qquad \tilde{ \vc{w}
}_h(\xh, z) =  \vc{w}_h (\tilde{\phi} (\xh, z), z) \\
\mathbb{B} (\xh, z) &= \left[\mbox{cof }
  \nabla_h {\tilde{\phi}} (\xh, z) \right]^t
\left[ (\nabla_h {\tilde{\phi}})^{-1}(\xh, z)\right]^t \\ & 
= \Big(\mbox{det} \nabla_h \tilde \phi (\xh, z) \Big)  \left[
  (\nabla_h {\tilde{\phi}})^{-1}(\xh, z) \right]
\left[ (\nabla_h {\tilde{\phi}})^{-1}(\xh, z)\right]^t  \\
\tilde{\bf n}_h(\xh, z) &= \left[ (\nabla_h
  {\tilde{\phi}})^{-1}(\xh, z) \right]  \vc{n}_h (\tilde\phi(\xh, z), z). 
\end{split}
\]
Thus, differentiating with respect to $z$ and using the standard
elliptic estimates we obtain the desired regularity in $z$. 
This ends the proof of Lemma \ref{rO1}.
\end{proof}

\begin{Example}
A typical example of a thin channel that we have in mind is:
$$ \Ome = \left\{ x= (x_1,x_2, z) \equiv ( \xh, z)\  \Big|\ z \in (0,1),
  \ \left| \xh - \ep {X}(z) \right|^2 < R^2(z) \right\}, $$
where $X:[0,1]\to\mathbb{R}^2$ and $R:[0,1]\to(0, +\infty)$ are two
given smooth functions, to the effect that each cross section $\omega_h(z)$ is simply a
circle $B(X(z), R(z))\subset\mathbb{R}^2$.
Note that we can then take:
\begin{equation*}
\vc{V}_{h}(\xh, z) = \frac{\partial_z R(z)}{R(z)} (\xh - {X}(z)) + \partial_z X(z).
\end{equation*}
We also check directly that $A(z) {\rm div}_h \vc{V}_h(z) = \pi R(z)^2 \cdot
2\frac{\partial_zR(z)}{R(z)} = 2\pi R(z)\partial_zR(z) = \partial_z A(z)$.
\end{Example}

\subsection{Dissipative weak solutions to the compressible Navier-Stokes system}

\begin{Definition} \label{D1}
We say that $[\vr, \vu]$ is a (weak) \emph{dissipative solution} to the Navier-Stokes system
(\ref{i3}--\ref{i5}) in the space-time cylinder $(0,T)\times \Ome$ with the boundary conditions (\ref{i7}) if and only if:
\begin{itemize}
\item $
\vr \in C_{\rm weak}([0,T]; L^\gamma(\Ome)),\  \vr \vu \in C_{\rm
  weak}([0,T]; L^\gamma(\Ome; \mathbb{R}^3)), \ \vu \in L^2(0,T;
W^{1,2}(\Ome; \mathbb{R}^3)),$\\ and $
\vr \geq 0 \ \mbox{a.e. in}\ (0,T) \times \Ome, \ \vu \cdot
\vc{n}|_{\partial \Ome} = 0; $
\item For any test function $\varphi \in C^\infty([0,T] \times
  \Ov{\Omega}_\ep)$ there holds:
\begin{equation} \label{mb}
\left[ \intOe{ \vr \varphi } \right]_{t = 0}^{t = \tau} = \int_0^\tau
\intOe{ \left( \vr \partial_t \varphi + \vr \vu \cdot \Grad \varphi \right) } \ \dt;
\end{equation}
\item For any test function $\varphi \in C^\infty([0,T] \times \Ov{\Omega}_\ep; \mathbb{R}^3)$,
$\varphi \cdot \vc{n}|_{\partial \Ome} = 0$ there holds:
\[
\left[ \intOe{ \vr \vu \cdot \varphi } \right]_{t = 0}^{t = \tau} = \int_0^\tau
\intOe{ \left( \vr \vu \cdot \partial_t \varphi + \vr \vu \otimes \vu : \Grad \varphi + p(\vr)
\Div \varphi - \lambda \tn{S}(\Grad \vu): \Grad \varphi \right) } \ \dt;
\]
\item The energy inequality:
\[
\intOe{ \left( \frac{1}{2} \vr |\vu|^2 + H(\vr) \right)(\tau, \cdot) } +
\lambda \int_0^\tau \intOe{ \tn{S}(\Grad \vu) : \Grad \vu } \ \dt
\leq \intOe{ \left( \frac{|\vr \vu |^2 }{2 \vr}  + H(\vr) \right)(0, \cdot) },
\]
with:
\[
H(\vr) = \vr \int_1^\vr \frac{p(z)}{z^2} \ {\rm d}z,
\]
holds for a.e. $\tau \in (0,T)$.
\end{itemize}
\end{Definition}

The existence of dissipative solutions can be shown by the method of
Lions \cite{LI4}, with the necessary modifications introduced in
\cite{FNP}. 

\begin{Remark} \label{RR1}
In the Navier-Stokes limit, we will impose an extra boundary condition:
\begin{equation} \label{slip}
\vu (x_h, z) = 0 \qquad \forall (x_h,z) \in \Ov{\Omega}_\ep,\quad z \in \{ 0,1 \}.
\end{equation}
Accordingly, the class of admissible test functions in the momentum balance (\ref{mb}) is restricted to:
\[
\varphi \in C^\infty ([0,T] \times \Ov{\Omega}_\ep; \mathbb{R}^3),\qquad
\varphi \cdot \vc{n}|_{\partial \Ome} = 0, \qquad \varphi \ \mbox{compactly supported in}\ z \in (0,1).
\]
\end{Remark}

\subsection{Main results}

Our goal is to identify the asymptotic limit for solutions of system
(\ref{i3}--\ref{i5}), (\ref{i7})/(\ref{slip}) if the diameter $\ep$ of
the cylinder $\Ome$ tends to zero. To measure the distance to the
solutions of the limit system, we use the relative energy functional: 
\begin{equation}
\label{r1} {\mathcal E}_\ep \left( \vr, \vu \ \Big| r, \vc{U} \right) =
\intOe{ \left( \frac{1}{2} \vr |\vu - \vc{U}|^2 + H(\vr) - H'(r)
(\vr - r) - H(r) \right)}. 
\end{equation}
{Since $H''(\vr) = p'(\vr)/\vr$ and} the pressure $p$ is a
{strictly} increasing differentiable function of the density, the
pressure potential $H$ is strictly convex  and it is easy to
check that {for $r > 0$}: 
\[
{\mathcal E}_\ep \left( \vr, \vu \ \Big| r, \vc{U} \right) =0 \ \Leftrightarrow  \ \vr = r, \ \vu = \vc{U}.
\]
Moreover, it follows from (\ref{press}) that:
\begin{equation} \label{coerc}
\begin{split}
&C_1(K) \left( \left| \vu - \vc{U} \right|^2 + \left| \vr - r \right|^2 \right) \leq
\frac{1}{2} \vr |\vu - \vc{U}|^2 + H(\vr) - H'(r) (\vr - r) - H(r) \\
& \qquad\qquad\qquad\qquad \leq C_2(K)\left( \left| \vu - \vc{U} \right|^2 + \left| \vr - r \right|^2 \right)
\qquad \forall  \vr, r \in K \subset (0, \infty), \ K \ \mbox{compact}\\
&\mbox{and }\quad \frac{1}{2} \vr |\vu - \vc{U}|^2 + H(\vr) - H'(r) (\vr - r) - H(r) \geq C(K, \tilde K)
\left( 1 + \vr |\vu - \vc{U}|^2 + \vr^\gamma \right) \\
& \qquad\qquad\qquad \qquad \forall r \in K \subset {\rm int}[\tilde K],\
\vr \in [0, \infty) \setminus \tilde K, \ \tilde K \subset (0, \infty) \ \mbox{compact}.
\end{split}
\end{equation}

\subsubsection{Inviscid limit}

The system (\ref{i1}), (\ref{i2}) can be written as a semilinear perturbation of the standard isentropic Euler
system {in the following form}:
\begin{align*}
\partial_t \vr_E + \partial_z (\vr_E u_E) + \frac{\partial_z A}{A}
\vr_E u_E &= 0,
\\
\partial_t (\vr_E u_E) + \partial_z (\vr_E u_E^2) + \partial_z p(\vr_E) + \frac{\partial_z A}{A} \vr_E u_E^2 &= 0.
\end{align*}
In view of the standard theory of hyperbolic conservation laws, see e.g. Majda \cite{Majd}, one can therefore
anticipate the existence of \emph{local} in time smooth solutions to
problem (\ref{i1}), (\ref{i2}) provided the initial data are smooth
enough. As shown in the following theorem, these solutions may be seen
as suitable limits of those of the Navier-Stokes system
(\ref{i3}--\ref{i5}), (\ref{i7}) in $\Ome$ in the regime $\ep, \lambda\to 0$. 

\Cbox{Cgrey}{
\begin{Theorem} \label{Tinv}
Let $\Omega_\ep$ be given by (\ref{i6}), where $\Omega = \Omega_1$ is determined through (\ref{hyp0}), with $\vc{V}_h \in C^1(\Ov{\Omega}; \mathbb{R}^2)$.
Let the pressure $p$ satisfy hypothesis (\ref{press}).
Set:
\[
A(z) = |\omega(z)|.
\]
Let $[\vr_E, u_E]$ be a classical solution of the Euler system (\ref{i1}), (\ref{i2}) on a time
interval $[0,T]$ such that:
\begin{equation} \label{sleu}
u_E|_{ z \in \{ 0,1 \} } = 0.
\end{equation}
Let $[\vr, \vu]$ be a (weak) dissipative solution of the Navier-Stokes system (\ref{i3}--\ref{i5}), (\ref{i7})
in $(0,T) \times \Ome$.

Then there is a constant $C$, depending only on time $T$, 
on the norm of the solution $[\vr_E, u_E]$, on
the $C^1$ norm of $\vc{V}_h$, but independent of
$[\vr, \vu]$ and of the scaling parameters $\lambda$ and $\ep$, such that:
\begin{equation} \label{euler}
\frac{1}{|\Ome|} \mathcal{E}_\ep \left( \vr, \vu \ \Big| \ \vr_E, \vu_E \right) (\tau) \leq C \left(
\lambda + \ep + \frac{1}{|\Ome|} \mathcal{E}_\ep \left( \vr, \vu \ \Big| \ \vr_E, \vu_E \right) (0) \right)
\end{equation}
for any $\tau \in (0,T)$, where we have set $\vu_E = [0,0,u_E]$.
\end{Theorem}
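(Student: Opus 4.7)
The plan is to apply the relative energy method of \cite{FeJiNo,FeNoSun} with test functions built from the classical 1D Euler solution $(\vr_E, u_E)$ and the geometric vector field $\vc{V}_h$ provided by Lemma \ref{rO1}. The momentum test is the $\ep$-corrected field
\begin{equation*}
\vU_\ep(\xh,z) = \bigl[\ep\, u_E(z)\, \vc{V}_h(\xh/\ep, z),\ u_E(z)\bigr],
\end{equation*}
paired with the density test $r_\ep(z) = \vr_E(z)$. By (\ref{hyp0}) the vector $[\ep\vc{V}_h(\xh/\ep,z),1]$ is tangent to the lateral part of $\partial\Omega_\ep$, hence $\vU_\ep\cdot\vc{n}=0$ there, while (\ref{sleu}) gives the vanishing at $z\in\{0,1\}$; thus $\vU_\ep$ is admissible in the momentum balance. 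The $\ep$-scaling in the horizontal corrector balances horizontal differentiation, so $\Grad\vU_\ep$ is bounded uniformly in $\ep$, and Lemma \ref{rO1}(iii) delivers the crucial identity
\begin{equation*}
\Div\vU_\ep(\xh,z) = \partial_z u_E(z) + \frac{\partial_z A(z)}{A(z)}\, u_E(z),
\end{equation*}
depending only on $z$.

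Inserting $(r_\ep,\vU_\ep)$ into the relative energy inequality derived from Definition \ref{D1} yields
\begin{equation*}
\mathcal{E}_\ep\bigl(\vr,\vu\,|\,r_\ep,\vU_\ep\bigr)(\tau) + \lambda \int_0^\tau\!\!\intOe{\bigl(\tn{S}(\Grad\vu)-\tn{S}(\Grad\vU_\ep)\bigr)\!:\!\Grad(\vu-\vU_\ep)}\, \dt \leq \mathcal{E}_\ep(0) + \int_0^\tau \mathcal{R}_\ep\, \dt,
\end{equation*}
with a standard residual $\mathcal{R}_\ep$ depending on $\partial_t\vU_\ep$, $\Grad\vU_\ep$, $\partial_t H'(r_\ep)$, $\Grad H'(r_\ep)$, $p(r_\ep)\Div\vU_\ep$, together with the viscous term $\lambda\tn{S}(\Grad\vU_\ep)\!:\!\Grad(\vU_\ep-\vu)$. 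A direct computation, using (\ref{i1}) and the above divergence identity, shows that $(r_\ep,\vU_\ep)$ satisfies the 3D continuity equation $\partial_t r_\ep + \Div(r_\ep\vU_\ep)=0$ exactly, which eliminates the density-related contribution of $\mathcal{R}_\ep$. For the momentum-related part, I substitute the 1D Euler equation (\ref{i2}) and perform a cross-sectional integration: all $O(1)$-in-$\ep$ terms cancel thanks once again to Lemma \ref{rO1}(iii), leaving only $O(\ep)$ remainders originating from the horizontal corrector $\ep u_E\vc{V}_h$ in the convective and time-derivative contributions.

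The leftover errors are then estimated as follows. The viscous flux $\lambda\tn{S}(\Grad\vU_\ep)\!:\!\Grad(\vu-\vU_\ep)$ is split into its $\mu$- and $\eta$-parts; Young's inequality gives a rough piece bounded by $\lambda\intOe{|\Grad\vU_\ep|^2} \leq C\lambda|\Omega_\ep|$ plus a quadratic piece in $\vu-\vU_\ep$ that is absorbed into the dissipation on the left-hand side. The geometric and pressure residuals split into (i) $O(\ep)|\Omega_\ep|$-terms arising from the $\ep$-scaled horizontal corrections, bounded using uniform boundedness of $(\vr_E,u_E,\vc{V}_h)$ and their derivatives, and (ii) terms quadratic in $(\vr-r_\ep,\vu-\vU_\ep)$, absorbed into $\mathcal{E}_\ep$ itself through the coercivity estimate (\ref{coerc}) on a compact range of $(r_\ep,\vU_\ep)$. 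The resulting bound
\begin{equation*}
\mathcal{E}_\ep(\tau) \leq \mathcal{E}_\ep(0) + C(\ep+\lambda)|\Omega_\ep| + C\int_0^\tau \mathcal{E}_\ep\, \dt,
\end{equation*}
combined with the observation $|\mathcal{E}_\ep(\vr,\vu|\vr_E,\vu_E) - \mathcal{E}_\ep(\vr,\vu|r_\ep,\vU_\ep)| \leq C\ep^2|\Omega_\ep|$, yields (\ref{euler}) after dividing by $|\Omega_\ep|$ and applying Gr\"onwall's lemma.

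The main technical obstacle is the cross-sectional bookkeeping in the second step: verifying that every $O(1)$-in-$\ep$ contribution of $\mathcal{R}_\ep$ cancels exactly against the 1D Euler momentum equation (\ref{i2}) after horizontal integration. This cancellation hinges on the identity $A(z)\Divh\vc{V}_h = \partial_z A(z)$ of Lemma \ref{rO1}(iii); without it, the residual would only be $O(1)|\Omega_\ep|$ rather than $O(\ep)|\Omega_\ep|$, destroying the normalised rate in (\ref{euler}).
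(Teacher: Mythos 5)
Your proposal follows exactly the paper's route: same corrected test field $\vc{U}_\ep=[\ep u_E\vc{V}_h(\xh/\ep,\cdot),u_E]$, same use of Lemma~\ref{rO1}(iii) and Lemma~\ref{Lg1} to cancel the leading-order terms in the remainder, same splitting of the viscous flux by Young's inequality and absorption into the dissipation, and the same Gronwall closure. The only inaccuracy is the final comparison: the bound $|\mathcal{E}_\ep(\vr,\vu\,|\,\vr_E,\vu_E)-\mathcal{E}_\ep(\vr,\vu\,|\,\vr_E,\vc{U}_\ep)|\le C\ep^2|\Ome|$ is not true as an absolute estimate, since expanding $\tfrac12\vr|\vu-\vu_E|^2-\tfrac12\vr|\vu-\vc{U}_\ep|^2=\vr(\vu-\vc{U}_\ep)\cdot(\vc{U}_\ep-\vu_E)+\tfrac12\vr|\vc{U}_\ep-\vu_E|^2$ produces a cross term that is only $O(\ep)\intOe{\vr|\vu-\vc{U}_\ep|}$; after Cauchy--Schwarz one gets $C\ep|\Ome|+C\mathcal{E}_\ep(\cdot\,|\,\vr_E,\vc{U}_\ep)$, which is still enough to deduce (\ref{euler}) but is an $O(\ep)$, not $O(\ep^2)$, correction.
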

}
Theorem \ref{Tinv} will be shown in Section \ref{CE}.

\subsubsection{Positive viscosity limit}

Similarly to the preceding section, we may rewrite \eqref{i1a}, \eqref{i2a} as:
\begin{align*}
\partial_t \vr_{NS} + \partial_z (\vr_{NS} u_{NS}) + \frac{\partial_z
  A}{A} \vr_{NS} u_{NS} &= 0,
\\
\partial_t (\vr_{NS} u_{NS}) + \partial_z (\vr_{NS} u_{NS}^2)
+ \partial_z p(\vr_{NS}) + \frac{\partial_z A}{A} \vr_{NS} u_{NS}^2 
&= \left( \frac{4 \mu}{3} + \eta \right)  \partial_z^2 u_{NS} + 
\left( \frac{\mu}{3} + \eta\right) \partial_z \left(\frac{\partial_z A}{A} u_{NS}\right). 
\end{align*}
Thus, by analogy to its inviscid counterpart, we may anticipate the existence of at least local-in-time smooth
solutions to system \eqref{i1a}, \eqref{i2a}, supplemented with the boundary conditions:
\begin{equation} \nonumber 
u_{NS}|_{ z\in\{ 0,1 \} }  = 0,
\end{equation}
for sufficiently smooth initial data. Moreover, in view of the theory developed by Kazhikhov \cite{KHA2},
we may even expect those solutions to be global in time, however, we were not able to find a relevant reference.
We claim the following result proved in Section \ref{CNS}.

\Cbox{Cgrey}{
\begin{Theorem} \label{Tvisc}
Let $\Omega_\ep$ be given by (\ref{i6}), where $\Omega$ is determined
through (\ref{hyp0}), with the vector field $\vc{V}_h \in
C^2(\Ov{\Omega}; \mathbb{R}^2)$. Let the pressure $p$ satisfy hypothesis (\ref{press}).
Set:
\[
A(z) = |\omega(z)|.
\]
Let $[\vr_{NS}, u_{NS}]$ be a classical solution of the Navier-Stokes
system with drift (\ref{i1a}), (\ref{i2a}) on a time interval $[0,T]$, satisfying:
\[
u_{NS}|_z\in\{ 0,1 \} = 0.
\]
Let $[\vr, \vu]$ be a (weak) dissipative solution of the Navier-Stokes system (\ref{i3}--\ref{i5}), (\ref{i7})
in $(0,T) \times \Ome$ with $\lambda = 1$ and strictly positive
  bulk viscosity $\eta > 0$, satisfying, in addition, the no-slip
boundary condition (\ref{slip}) at the horizontal part of the boundary of the cylinder $\Ome$.

Then there is a constant $C$, depending only on time $T$, on
the norm of the solution $[\vr_{NS}, u_{NS}]$, on the 
$C^2$ norm of the vector field $\vc{V}_h$, but independent of
$[\vr, \vu]$ and of the scaling parameter $\ep$, such that:
\begin{equation} \nonumber
\frac{1}{|\Ome|} \mathcal{E}_\ep \left( \vr, \vu \ \Big| \ \vr_{NS}, \vu_{NS} \right) (\tau) \leq C \left(
\ep + \frac{1}{|\Ome|} \mathcal{E}_\ep \left( \vr, \vu \ \Big| \ \vr_{NS}, \vu_{NS} \right) (0) \right)
\end{equation}
for any $\tau \in (0,T)$, where we have set $\vu_{NS} = [0,0,u_{NS}]$.
\end{Theorem}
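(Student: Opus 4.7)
The plan is to prove Theorem \ref{Tvisc} via the relative energy inequality of Section \ref{r}, tested against a carefully chosen pair $(r,\vU)$ that accommodates both the boundary conditions on $\partial\Ome$ and the drift term in (\ref{i2a}). Since the ansatz $\vu_{NS}=[0,0,u_{NS}]$ fails $\vU\cdot\vc{n}=0$ on the lateral wall and creates an $O(1)$ defect in the 3D continuity equation, I would build a horizontal corrector from the field $\vc{V}_h$ supplied by Lemma \ref{rO1}:
\[
r(t,x) = \vr_{NS}(t,z), \qquad \vU(t,x) = \bigl[\,\ep u_{NS}(t,z)\,\vc{V}_h(\xh/\ep,\, z),\ u_{NS}(t,z)\,\bigr].
\]
By the tangency condition in (\ref{hyp0}), $\vU\cdot\vc{n}=0$ on the lateral boundary, while $u_{NS}|_{z=0,1}=0$ delivers (\ref{slip}). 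Combined with identity (\ref{g1a}), $\Divh\vc{V}_h = \partial_z A/A$, this produces $\Div\vU = u_{NS}\partial_z A/A + \partial_z u_{NS}$, whence $\partial_t r + \Div(r\vU) = 0$ pointwise in $\Ome$ by (\ref{i1a}). Since $\vU=\vu_{NS}+O(\ep)$, the relative energies $\mathcal{E}_\ep(\vr,\vu\,|\,\vr_{NS},\vu_{NS})$ and $\mathcal{E}_\ep(\vr,\vu\,|\,\vr_{NS},\vU)$ differ by $O(\ep|\Ome|)$ after invoking the a priori bound on $\sqrt{\vr}\,\vu$ from the energy inequality, an error absorbed into the final estimate.

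Next I would show that the 3D momentum residual $\vc{R} = \partial_t(r\vU) + \Div(r\vU\otimes\vU) + \Grad p(r) - \Div\tn{S}(\Grad\vU)$ satisfies $\|\vc{R}\|_{L^\infty(\Ome)}\leq C\ep$. The vertical component vanishes identically: using (\ref{g1a}), the horizontal-to-vertical viscous contributions $\partial_{x_1}\tn{S}_{31}+\partial_{x_2}\tn{S}_{32}$ evaluate to $\mu\,\partial_z(u_{NS}\,\partial_z A/A)$, and together with $\partial_z\tn{S}_{33}$ they reproduce precisely the right-hand side of (\ref{i2a}) divided by $A$; the specific coefficient $(\mu/3+\eta)$ emerges exactly from this recombination. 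The naive $O(\ep^{-1})$ contributions to the horizontal components $(\Div\tn{S})_i$, $i=1,2$, cancel because of the two conditions in (\ref{hyp0}): the singular piece is proportional to $\Delta_h\vc{V}_h + \Gradh(\Divh\vc{V}_h)$, both summands of which vanish by hypothesis. What remains in $\vc{R}_h$ stems from $\partial_z\tn{S}_{i3}$, $\partial_t\vU_h$, and $\Div(r\vU\otimes\vU)_h$, all of which carry at least one explicit factor of $\ep$ and are controlled by the $C^2$ norm of $\vc{V}_h$ and the regularity of $(\vr_{NS},u_{NS})$.

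Plugging $(r,\vU)$ into the relative energy inequality will then yield
\[
\mathcal{E}_\ep(\tau) + \int_0^\tau\!\intOe{\tn{S}(\Grad(\vu-\vU)):\Grad(\vu-\vU)}\dt \leq \mathcal{E}_\ep(0) + \int_0^\tau \mathcal{Q}(t)\,\dt + \int_0^\tau\! \intOe{\vc{R}\cdot(\vU-\vu)}\,\dt,
\]
where $\mathcal{Q}(t)$ collects the standard quadratic-in-$(\vu-\vU,\vr-r)$ terms integrated against the uniform $C^2$ bounds of the limit profile and of the corrector, and obeys $|\mathcal{Q}(t)|\leq C\mathcal{E}_\ep(t)$ via (\ref{coerc}). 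From the pointwise bound on $\vc{R}$ and Cauchy-Schwarz, the last integral is dominated by $\delta\,\intOe{|\vu-\vU|^2} + C(\delta)\,\ep^2|\Ome|$.

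The principal obstacle is absorbing this $L^2$ norm of $\vu-\vU$ into the viscous dissipation with a constant uniform in $\ep$. Here the hypothesis $\eta>0$ is decisive: a direct algebraic computation gives
\[
\tn{S}(\Grad\vw):\Grad\vw \geq c_0\,\bigl|\Grad\vw + \Grad^t\vw\bigr|^2, \qquad c_0 = c_0(\mu,\eta) > 0,
\]
so that the Korn-Poincar\' e inequality (\ref{korn-poin}), proved uniformly in $\ep$ in Section 5 for vector fields satisfying both the slip condition on the lateral wall and the no-slip condition at $z=0,1$ (each of which holds for $\vw=\vu-\vU$), produces $\intOe{|\vu-\vU|^2}\leq C\,\intOe{\tn{S}(\Grad(\vu-\vU)):\Grad(\vu-\vU)}$ with $C$ independent of $\ep$. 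Choosing $\delta$ sufficiently small, absorbing, dividing by $|\Ome|$, and applying Gronwall's lemma will deliver
\[
\frac{1}{|\Ome|}\mathcal{E}_\ep(\tau) \leq \exp(CT)\left(\frac{1}{|\Ome|}\mathcal{E}_\ep(0) + C\ep\right),
\]
which is the bound asserted by Theorem \ref{Tvisc}. If $\eta$ were only non-negative, one would instead be forced to invoke the \emph{conformal} Korn-Poincar\' e inequality (\ref{CKP}), which is not available here; this is the sole place where strict positivity of the bulk viscosity enters the argument.
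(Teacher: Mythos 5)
Your test functions $(r,\vU)$ are the same ones the paper uses, your computation of the momentum residual $\vc{R}$ (vertical component vanishing by \eqref{i2a}, horizontal $O(\ep)$ thanks to $\Delta_h\vc{V}_h = 0$, $\Gradh\Divh\vc{V}_h=0$ in \eqref{hyp0}), and your use of the Korn--Poincar\'e inequality \eqref{korn-poin} together with $\eta>0$ to absorb $\intOe{|\vu-\vU|^2}$ are all on target. However, there is a genuine gap in the step where you reduce the relative energy remainder to $\int_0^\tau\mathcal{Q}(t)\,\dt + \int_0^\tau\intOe{\vc{R}\cdot(\vU-\vu)}\,\dt$ with $|\mathcal{Q}|\leq C\mathcal{E}_\ep$.

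The problem is that in the remainder \eqref{r5} the inertial term carries the weight $\vr$, not $r$: after substituting the momentum equation for $(r,\vU)$, the first term of \eqref{r5} becomes $\intOe{\frac{\vr}{r}\bigl(\vc{R} - \Grad p(r) + \Div\tn{S}(\Grad\vU)\bigr)\cdot(\vU-\vu)}$, whereas the dissipative correction $\intOe{\tn{S}(\Grad\vU):\Grad(\vU-\vu)}$ carries weight $1$. The pressure parts cancel cleanly against the $H'(r)$-terms, but the viscous parts leave behind the cross-term
\[
\intOe{\frac{\vr - r}{r}\,\Div\tn{S}(\Grad\vU)\cdot(\vU-\vu)},
\]
whose leading contribution is $\intOe{\frac{\vr - \vr_{NS}}{\vr_{NS}}\,F\,(u_{NS}-u_3)}$ with $F = (\tfrac{4\mu}{3}+\eta)\partial_z^2 u_{NS} + (\tfrac{\mu}{3}+\eta)\partial_z(\partial_z(\ln A)\,u_{NS})$. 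This term is $O(1)$, not $O(\ep)$, since $F$ is $O(1)$. It is quadratic in $(\vr-r,\vU-\vu)$, but it is \emph{not} bounded by $C\mathcal{E}_\ep$ alone: the relative energy controls $|\vr-r|^2$ and $\vr|\vU-\vu|^2$ only on the essential set, and on the residual set one can extract from it only $\delta\intOe{|\vU-\vu|^2}$ after an essential/residual splitting as in \eqref{coerc}, with that piece then absorbed by the Korn--Poincar\'e inequality and the dissipation. Your proposal invokes Korn--Poincar\'e only to absorb the $\delta\intOe{|\vu-\vU|^2}$ generated by the small residual $\vc{R}$; you have not accounted for the larger source coming from the viscous cross-term. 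This is precisely what makes the viscous limit ``more delicate'' than the Euler one in the paper, and it is the place where the essential/residual decomposition of Section \ref{c} is indispensable. The rest of the strategy — Gronwall with the absorbed dissipation — then goes through exactly as you describe.
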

}
As already pointed out, the proof of Theorem \ref{Tvisc} is based on a
version of Korn-Poincar\' e inequality on thin domains proved in
Section \ref{K}. 

\section{The relative energy inequality}\label{r}

As shown in \cite{FeJiNo}, \emph{any} dissipative solution $[\vr,\vu]$ of the Navier-Stokes system
(\ref{i3}--\ref{i5}) satisfies the \emph{relative energy inequality}:
\begin{equation}
\label{r3}
\begin{split}
{\mathcal E}_\ep \left( \vr, \vu \ \Big| r ,
\vc{U} \right)(\tau) &+ \int_0^\tau \intOe{ \Big( \tn{S} (\Grad
\vu) - \tn{S} (\Grad \vc{U}) \Big): \Big( \Grad \vu - \Grad \vc{U}
\Big) } \ \dt
\\
&\leq \ {\mathcal E}_\ep \left( \vr(0, \cdot), \vu(0, \cdot) \ \Big| \ r(0, \cdot), \vc{U}(0, \cdot) \right) +
\int_0^\tau {\mathcal R}_\ep (\vr, \vu, r, \vc{U} ) \ \dt,
\end{split}
\end{equation}
with the remainder:
\begin{equation}\nonumber
\begin{split}
{\mathcal R}_\ep \left( \vr, \vu, r, \vc{U} \right) = &
\intOe{  \vr \Big( \partial_t \vc{U} + \vu \Grad \vc{U} \Big) \cdot
(\vc{U} - \vu )}
+ \lambda\intOe{\tn{S}(\Grad \vc{U}):\Grad (\vc U- \vc{u})  }
\\ & + \intOe{ \left( (r - \vr) \partial_t H'(r) + \Grad H'(r) \cdot
\left( r \vc{U} - \vr \vu \right) \right) }
\\ & - \intOe{ \Div \vc{U} \Big( p(\vr) - p(r) \Big) }.
\end{split}
\end{equation}
Here $[r,\vc{U}]$ represent arbitrary test functions that are sufficiently smooth and satisfy a kind of
compatibility conditions:
\begin{equation} \label{r4a}
r > 0, \qquad \vc{U} \cdot \vc{n}|_{\partial \Ome} = 0,
\end{equation}
and:
\begin{equation} \label{r4b}
\vc{U} (x_h, z) = 0 \qquad\forall (x_h,z) \in \Ov{\Omega}_\ep,\ { z\in\{ 0,1 \} }
\end{equation}
provided the extra no-slip condition (\ref{slip}) is imposed.

\subsection{Extending the velocity field}

The proofs of Theorems \ref{Tinv}, \ref{Tvisc} are based on the idea to
use the solutions of the target systems {to construct} test
functions for the relative energy inequality (\ref{r3}). This
cannot be done directly as the velocity fields $u_E$, $u_{NS}$ or, more specifically, their extensions $\vu_E 
= [0,0,u_E]$, $\vu_{NS} = [0,0,u_{NS}]$ do not comply with the
boundary conditions (\ref{r4a}), (\ref{r4b}), respectively. Instead,
we consider a {tilted} extension of a velocity field of the form:
\begin{equation} \label{g2}
\vc{U}_\ep = \left[ \vc{V}_{h, \ep} , 1 \right] v,\quad v = u_E, u_{NS},
\end{equation} 
where:
\bFormula{g2A}
\vc{V}_{h,\ep}(\xh,z) := \ep \vc{V}_h \left( \frac{\xh}{\ep}, z \right) \qquad \forall(\xh,z) \in \Ov{\Omega}_\ep
\eF
and $\vc{V}_h$ is the vector field introduced in (\ref{hyp0}).
As the vector field $ \left[ \vc{V}_{h,\ep}, 1 \right]$
is tangent to $\partial \Ome$ at any point of the lateral boundary $\partial \Ome \cap \{ 0 < z < 1 \}$,
$\vc{U}_\ep$ is an admissible test function in (\ref{r3}) as soon as
$u_E$, $u_{NS}$ vanish at $z\in\{ 0,1 \}$. The following
result shows that the extension defined through (\ref{g2}) satisfies
also the equation of continuity. 

\begin{Lemma} \label{Lg1}
Let $\vc{U}_\ep$ be the velocity field defined by (\ref{g2}) and
suppose that the functions $r = r(z)$, $v = v(z)$ satisfy: 
\[
\partial_t \left( r A \right) + \Div \left(r v A \right) =  \partial_t \left( r A \right) + \partial_z \left(r v A \right) = 0
\qquad \forall z \in (0,1),
\]
where $A(z) = |\omega_h(z)|$. Then:
\[
\partial_t r + \Div (r \vc{U}_\ep ) = 0 \qquad  \mbox{in } {\Ome}.
\]
\end{Lemma}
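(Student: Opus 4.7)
The strategy is a direct computation: expand $\Div(r\vc{U}_\ep)$ using the product rule and the fact that $r$ and $v$ depend only on the vertical variable $z$, then identify the result with the hypothesis by multiplying through by $A(z)$.

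Concretely, I would first write $r\vc{U}_\ep = r(z) v(z) [\vc{V}_{h,\ep}, 1]$ and apply the product rule to obtain
\[
\Div(r\vc{U}_\ep) = r v \, \Div [\vc{V}_{h,\ep}, 1] + [\vc{V}_{h,\ep}, 1] \cdot \Grad(rv).
\]
Since $rv$ is a function of $z$ alone, the second term collapses to $\partial_z(rv)$. The first term reduces to $rv \, \Divh \vc{V}_{h,\ep}$, because the third component of $[\vc{V}_{h,\ep},1]$ is constant in $z$.

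The key observation, which is the whole point of the scaling (\ref{g2A}), is that
\[
\Divh \vc{V}_{h,\ep}(\xh, z) = \ep \cdot \frac{1}{\ep} \, (\Divh \vc{V}_h)\Big(\frac{\xh}{\ep}, z\Big) = (\Divh \vc{V}_h)\Big(\frac{\xh}{\ep}, z\Big),
\]
and by Lemma \ref{rO1}(iii) the right-hand side equals $\partial_z A(z)/A(z)$, which is independent of the horizontal variable (and of $\ep$). This is really the only nontrivial input; everything else is bookkeeping.

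Assembling the pieces yields
\[
\partial_t r + \Div(r\vc{U}_\ep) = \partial_t r + \partial_z(rv) + rv \, \frac{\partial_z A}{A},
\]
and multiplying through by the scalar $A(z)$ gives exactly $\partial_t(rA) + \partial_z(rvA) = 0$, which holds by assumption. There is no real obstacle here: the lemma is essentially a restatement, in three-dimensional divergence form on $\Ome$, of the one-dimensional continuity equation on $(0,1)$, the bridge being the identity $\Divh \vc{V}_h = \partial_z A / A$ established in Lemma \ref{rO1}.
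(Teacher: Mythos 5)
Your proof is correct and rests on exactly the same two facts the paper uses: the scaling invariance $\Divh \vc{V}_{h,\ep} = \Divh \vc{V}_h$ and the identity $A\,\Divh \vc{V}_h = \partial_z A$ from Lemma \ref{rO1}(iii). The only cosmetic difference is that you expand $\Div(r\vc{U}_\ep)$ directly and multiply by $A$ at the end, whereas the paper computes $\partial_t(rA) + \Div(r\vc{U}_\ep A)$ in two ways and compares; the content is identical.
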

\begin{proof}
On one hand, we have:
\[
\partial_t (r A) + \Div (r \vc{U}_\ep A) = A \left( \partial_t r + \Div (r \vc{U}_\ep) \right) + r v \partial_z A.
\]
On the other hand, in accordance with (\ref{g1a}), we get:
\[
\begin{split}
\partial_t (r A) + \Div (r \vc{U}_\ep A) & =  \partial_t (rA) + \partial_z (r vA ) + {\rm div}_h (r \vc{V}_{h, \ep} v A)
= rvA {\rm div}_h \vc{V}_{h, \ep} =  rvA {\rm div}_h \vc{V}_{h} = rv \partial_z A,
\end{split}
\]
and the desired conclusion follows.
\end{proof}

\subsection{Relative energy inequality and the asymptotic limits}

We start by rewriting $\mathcal{R}_\ep$ as:
\begin{equation}
\label{r5}
\begin{split}
{\mathcal R}_{\ep}\left( \vr, \vu, r, \vc{U} \right) = &
\intOe{  \vr \Big( \partial_t \vc{U} + \vc{U} \cdot \Grad \vc{U} \Big) \cdot(\vc{U} - \vu) } - \intOe{ \vr (\vu - \vc{U}) \cdot \Grad \vc{U} \cdot (\vc{U} - \vu) }
\\ & + \lambda\intOe{\tn{S}(\Grad \vc{U}):\Grad (\vc U- \vc{u})  }
\\ & + \intOe{ \left( (r - \vr) \partial_t H'(r) + \Grad H'(r) \cdot
\left( r \vc{U} - \vr \vu \right) \right) }
\\ & - \intOe{ \Div \vc{U} \Big( p(\vr) - p(r) \Big) }.
\end{split}
\end{equation}

\subsubsection{Relative energy inequality in the inviscid limit}
\label{EU}

Take $r = \vr_E$, $\vc{U} = \vc{U}_\ep = [ \vc{V}_{h, \ep}, 1] u_E$ as test functions in the relative energy inequality
(\ref{r3}), where $\vr_E = \vr_E(z)$, $u_E = u_E(z)$ is a (smooth) solution of the 1D-Euler system (\ref{i1}), (\ref{i2})
satisfying the boundary conditions (\ref{sleu}).
Going back to (\ref{r5}) we compute:
\begin{equation} \label{g3}
\begin{split}
\intOe{  \vr \Big( \partial_t \vc{U}_{\ep} + \vc{U}_{\ep} \cdot &\Grad \vc{U}_{\ep} \Big) \cdot(\vc{U}_{\ep} - \vu) }
\\ & = \intOe{  \vr \Big( \partial_t \vc{U}_{\ep} - \partial_t \vu_E + \vc{U}_{\ep} \cdot \Grad \vc{U}_{\ep} -
\vu_E \cdot \Grad \vu_E \Big) \cdot(\vc{U}_{\ep} - \vu) }\\
&\quad + \intOe{  \vr \Big( \partial_t u_E + u_E \cdot \partial_z u_E \Big) (u_E - u_3) }\\
&= E_1 (\vr, \vc{U}_\ep, u_E, \vu) - \intOe{ \frac{\vr}{\vr_E } \partial_z p(\vr_E) (u_E - u_3)},
\end{split}
\end{equation}
where {the last equality follows from $\vr_E(\partial_t u_E + u_E
  \cdot \partial_z u_E + \partial_z p(\vr_E)) = 0$, which is a
  consequence of \eqref{i1} and \eqref{i2}, and the error term has the form}:
\[
E_1 (\vr, \vc{U}_\ep, u_E, \vu) =
\intOe{  \vr \Big( \partial_t \vc{U}_{\ep} - \partial_t \vu_E + \vc{U}_{\ep} \cdot \Grad \vc{U}_{\ep} -
\vu_E \cdot \Grad \vu_E \Big) \cdot(\vc{U}_{\ep} - \vu) }.
\]
Next, the terms containing the pressure, coming from the last line in \eqref{r5}, can be written as:
\[
\begin{split}
&\intOe{ \left( (\vr_E - \vr) \frac{1}{\vr_E} p'(\vr_E) \partial_t \vr_E + \frac{1}{\vr_E} p'(\vr_E) \partial_z \vr_E
\left( \vr_E u_E - \vr u_3 \right) \right) }\\
&= \intOe{ \partial_t p(\vr_E) + \partial_z p(\vr_E) u_E } - \intOe{ \frac{\vr}{\vr_E} p'(\vr_E) \left( \partial_t \vr_E +
\partial_z \vr_E u_3 \right)
}\\
&= \intOe{ \partial_t p(\vr_E) + \partial_z p(\vr_E) u_E } - \intOe{ \frac{\vr}{\vr_E} p'(\vr_E) \left( \partial_t \vr_E +
\partial_z \vr_E u_E \right)} + \intOe{ \frac{\vr}{\vr_E } \partial_z p(\vr_E) (u_E - u_3) }.
\end{split}
\]

Finally, we use the fact established in Lemma \ref{Lg1}, namely that
$[\vr_E, \vc{U}_\ep]$ solve the equation of continuity, to conclude: 
\begin{equation} \label{g4}
\begin{split}
&\intOe{ \left( (\vr_E - \vr) \frac{1}{\vr_E} p'(\vr_E) \partial_t \vr_E + \frac{1}{\vr_E} p'(\vr_E) \partial_z \vr_E
\left( \vr_E u_E - \vr u_3 \right) \right) }\\
&\qquad\qquad 
=  \intOe{ p'(\vr_E) \Big( \vr - \vr_E \Big) \Div \vc{U}_\ep } + \intOe{ \frac{\vr}{\vr_E } \partial_z p(\vr_E) (u_E - u_3) }.
\end{split}
\end{equation}
Thus, summing up (\ref{g3}), (\ref{g4}) and comparing the resulting expression with (\ref{r5}), we may infer that:
\begin{equation}
\label{g5}
\begin{split}
{\mathcal R}_\ep\left( \vr, \vu, \vr_E, \vc{U}_\ep \right) = &
\intOe{
\Div \vc{U}_\ep \Big( p(\vr) - p'(\vr_E) (\vr - \vr_E) -  p(\vr_E) \Big)}\\
&- \intOe{ \vr (\vu - \vc{U}_\ep) \cdot \Grad \vc{U}_\ep \cdot (\vc{U}_\ep - \vu) }
\\
&+ \lambda \intOe{\tn{S}(\Grad \vc{U}_\ep ):\Grad (\vc{U}_\ep - \vc{u})  } + E_1 (\vr, \vc{U}_\ep, u_E, \vu).
\end{split}
\end{equation}

\subsubsection{Relative entropy inequality in the viscous limit}
\label{NS}

The viscous (Navier-Stokes) limit can be handled in a similar way. An
analogue of (\ref{g3}), derived using \eqref{i1a} and \eqref{i2a}, reads:
\begin{equation} \nonumber 
\begin{split}
&\intOe{  \vr \Big( \partial_t \vc{U}_{\ep} + \vc{U}_{\ep} \cdot \Grad \vc{U}_{\ep} \Big) \cdot(\vc{U}_{\ep} - \vu) }\\
&\qquad= E_1 (\vr, \vc{U}_\ep, u_{NS}, \vu) - \intOe{ \frac{\vr}{\vr_{NS} } \partial_z p(\vr_{NS}) (u_{NS} - u_3) }
\\ &\qquad \quad
+ \intOe{ \frac{\vr}{\vr_{NS}} \left( \nu \partial^2_z u_{NS} + (\mu/3 + \eta)\partial_z ( \partial_z (\ln A) u_{NS} ) \right) (u_{NS} - u_3) },
\end{split}
\end{equation}
which, after a similar treatment as in Section \ref{EU} gives rise to the remainder:
\begin{equation}
\label{ns2}
\begin{split}
{\mathcal R}_\ep( \vr, \vu, &\vr_{NS}, \vc{U}_\ep)  = 
\int_{\Omega_\ep}  \Div \vc{U}_\ep \Big( p(\vr) - p'(\vr_{NS}) (\vr - \vr_{NS}) -  p(\vr_{NS}) \Big)~\mbox{d}x\\
& \qquad\qquad\quad - \intOe{ \vr (\vu - \vc{U}_\ep) \cdot \Grad \vc{U}_\ep \cdot (\vc{U}_\ep - \vu) }\\
& \qquad\qquad\quad + \intOe{ \frac{\vr}{\vr_{NS}} \left( \nu \partial^2_z u_{NS}
    + (\mu/3 + \eta)\partial_z ( \partial_z (\ln A) u_{NS} )
  \right) (u_{NS} - u_3) } 
\\ & \qquad\qquad\quad - \intOe{ \Div \tn{S}(\Grad \vc{U}_\ep ) \cdot (\vc{U}_\ep - \vc{u})  }
+ E_1 (\vr, \vc{U}_\ep, u_{NS}, \vu)\\
& = \intOe{ \Div \vc{U}_\ep \Big( p(\vr) - p'(\vr_{NS}) (\vr - \vr_{NS}) -  p(\vr_{NS}) \Big)}\\
& \quad - \intOe{ \vr (\vu - \vc{U}_\ep) \cdot \Grad \vc{U}_\ep \cdot (\vc{U}_\ep - \vu) }\\
& \quad + \intOe{ \frac{1}{\vr_{NS}} \Big( \vr - \vr_{NS}
  \Big) \left( \nu \partial^2_z u_{NS} + (\mu/3 +
      \eta)\partial_z ( \partial_z (\ln A) u_{NS} ) \right) \Big(u_{NS} - u_3 \Big) }\\ 
& \quad +  E_1 (\vr, \vc{U}_\ep, u_{NS}, \vu) + E_2(\vc{U}_\ep, u_{NS}, \vu),
\end{split}
\end{equation}
where we have set:
\[
\begin{split}
E_2(\vc{U}_\ep, u_{NS}, \vu) = & \intOe{ \left( \nu \partial^2_z u_{NS}
    + (\mu/3 + \eta)\partial_z ( \partial_z (\ln A) u_{NS} )
  \right) \Big( u_{NS} - u_3 \Big) } \\ & -\intOe{ \Div \tn{S}(\Grad
  \vc{U}_\ep ) \cdot (\vc{U}_\ep - \vc{u})  }. 
\end{split}
\]

\subsection{Estimates of the error terms}

Our goal is to show that the error terms $E_1$, $E_2$ vanish in the asymptotic limit $\ep \to 0$.
As for $E_1$, we first observe that:
\begin{equation}
\nonumber
\sup_{x \in \Ome} \left| \vc{U}_\ep - \vu_E \right| = \sup_{x \in \Ome} \left| u_E \vc{V}_{h, \ep} \right| \leq C\ep.
\end{equation}
Moreover, seeing that:
\[
\partial_t \vc{U}_\ep - \partial_t \vu_E = \partial_t u_E \vc{V}_{h,\ep},
\]
we deduce:
\[
\left\| \partial_t \vc{U}_\ep - \partial_t \vu_E \right\|_{C([0,T] \times \Ov{\Omega}_\ep)} +
\left\| \vc{U}_\ep \cdot \Grad \vc{U}_\ep - u_E \partial_z \vc{U}_{\ep} \right\|_{C([0,T] \times \Ov{\Omega}_\ep)} \leq C\ep.
\]

Finally, we estimate:
\[
\left\| u_E \partial_z \vc{U}_\ep - \vu_E \cdot \nabla_x \vu_E \right\|_{C([0,T] \times \Ov{\Omega}_\ep)} =
\left\| u_E \partial_z \left( u_E \vc{V}_{h,\ep} \right) \right\|_{C([0,T] \times \Ov{\Omega}_\ep)} \leq C\ep,
\]
obtaining:
\begin{equation} \label{ns3}
\left| E_1 (\vr, \vc{U}_\ep, u_E, \vu) \right| \leq C\ep \intOe{ \vr |\vc{U}_\ep - \vu |},
\end{equation}
{provided that $u_E$ is continuously differentiable in $[0,T] \times [0,1]$.}
Similarly, we can show that:
\begin{equation} \label{ns4}
\left| E_1 (\vr, \vc{U}_\ep, u_{NS}, \vu) \right| \leq C\ep \intOe{ \vr |\vc{U}_\ep - \vu | }
\end{equation}
provided $u_{NS}$ is continuously differentiable in $[0,T] \times [0,1]$.

To control $E_2$, we use:
$$\Div \tn{S}(\Grad \vc{U}_\ep) = \mu \Delta \vc{U}_\ep + \left(
  \frac{\mu}{3} + \eta\right) \nabla_x \Div \vc{U}_\ep\quad \mbox{and}
\quad \vc{U}_\ep(\xh,z) = [\vc{V}_{h,\ep}(\xh,z),1]u_{NS}(z)$$ 
and we write:
\begin{align*}
  \intOe{ \Div \tn{S}(\Grad \vc{U}_\ep ) \cdot (\vc{U}_\ep - \vc{u}) }
&= \intOe{ \mu \Delta \vc{U}_\ep \cdot (\vc{U}_\ep - \vu) + \left( \frac{\mu}{3} + \eta\right) \Grad \Div \vc{U}_\ep \cdot (\vc{U}_\ep - \vu) }
\\
&= \intOe{ \mu [ \Delta_h (\vc{V}_{h,\ep}) u_{NS} + \partial^2_z (\vc{V}_{h,\ep} u_{NS}) ]\cdot (\vc{U}_\ep - \vu)_h }
\\
&\quad + \intOe{ \mu \partial^2_zu_{NS}(u_{NS}-u_3)}
\\
&\quad + \intOe{ \left(\frac{\mu}{3} + \eta \right) [ \nabla_h \Divh (\vc{V}_{h,\ep}) u_{NS} + \nabla_h \partial_{z} u_{NS} ]\cdot (\vc{U}_\ep - \vu)_h }
\\
&\quad + \intOe{ \left(\frac{\mu}{3} + \eta \right) [ \partial_z ( \Divh (\vc{V}_{h,\ep}) u_{NS} ) + \partial^2_z u_{NS} ] (u_{NS} - u_3) }.
 \end{align*}
Since $\vc{V}_{h,\ep}(\xh,z)$ is given by (\ref{g2A}) with $\vc{V}_h$ satisfying (\ref{hyp0}), by assumptions of Theorem~\ref{Tvisc} the first and second derivative 
of $\vc{V}_{h,\ep}$ in the $z$-variable are bounded by
$C\ep$. Moreover: $\Delta_h \vc{V}_{h,\ep} = 0$, $\nabla_h \Divh(\vc{V}_{h,\ep})=0$, 
and  $|\partial^2_z(\vc{V}_{h,\ep}u_{NS})|\le C\ep$ provided that
$\partial^2_zu_{NS}$ is bounded in $[0,T]\times [0,1]$.
Since $u_{NS}$ is a function of $z$ only, we also see that
$\nabla_h \partial_z u_{NS} = 0$. Using $\Divh \vc{V}_{h,\ep} = \Divh
\vc{V}_h = \partial_z(\ln(A))$ in view of (\ref{g1a}), the above implies: 
\begin{multline}
\biggl| \intOe{ \Div \tn{S}(\Grad \vc{U}_\ep ) \cdot (\vc{U}_\ep - \vc{u}) }
- \left(\frac{4}{3}\mu + \eta \right)
\intOe{  \partial^2_zu_{NS}(u_{NS}-u_3)} \\
- \left(\frac{\mu}{3} + \eta \right)\intOe{  \partial_z
  ( \partial_z(\ln(A) u_{NS} ) (u_{NS} - u_3) }\biggr| \le C\ep \intOe
{|\vu - \vc{U}_\ep|}. 
\end{multline}
Consequently, we get:
\begin{equation}
\label{ns5}
\left| E_2(\vc{U}_\ep, u_{NS}, \vu) \right| \leq C \ep \intOe{ |\vu - \vc{U}_\ep | }
\end{equation}
provided that $\partial^2_{z} u_{NS}$ is bounded in $[0,T] \times [0,1]$.

\section{Convergence} \label{c}

Having collected the necessary material, we are now ready to complete the proofs of Theorems \ref{Tinv}, \ref{Tvisc}.
As the solutions of the limit systems are regular, we may assume: 
\[
0 < \underline{\vr} \leq \vr_E \leq \Ov{\vr},\qquad 0 < \underline{\vr} \leq \vr_{NS} \leq \Ov{\vr}
\]
for certain positive constants $\underline{\vr}$, $\Ov{\vr}$.
Next, it is convenient to introduce
the \emph{essential} and \emph{residual} component of an integrable function $h$ as:
\[
h_{\rm ess} = \chi( \vr ) h, \qquad h_{\rm res} = (1 - \chi(\vr)) h,
\]
where:
\[
\chi \in \DC(0, \infty), \qquad 0 \leq \chi \leq 1, \qquad \chi(z) = 1
\qquad \forall z \in [\underline{\vr}/2, 2 \Ov{\vr} ].
\]

\subsection{Convergence to the Euler system - the proof of Theorem \ref{Tinv}}
\label{CE}

It follows from the relative energy inequality (\ref{r3}), the
coercivity {\eqref{coerc}}, and the bounds (\ref{g5}), (\ref{ns3}) that: 
\[
\begin{split}
\left[ \mathcal{E}_\ep \left( \vr, \vu \ \Big|\ \vr_E, \vc{U}_\ep \right)(t) \right]_{t=0}^{t = \tau}
&+ \lambda \int_0^\tau \intOe{ \left( \tn{S}(\Grad \vu) - \tn{S}(\Grad
    \vc{U}_\ep ) \right) : \left( \Grad \vu - \Grad \vc{U}_\ep \right)}  \ \dt\\ 
&\leq C \int_0^\tau \mathcal{E}_\ep \left( \vr, \vu \ \Big|\ \vr_E,
  \vc{U}_\ep \right)(t) \ \dt + C \ep \int_0^\tau \intOe{ \vr |\vu -
  \vc{U}_\ep | } \\ 
& \quad + \lambda \int_0^\tau \intOe{ \tn{S} (\Grad \vc{U}_\ep ) : \Grad (\vc{U}_\ep - \vu ) } \ \dt,
\end{split}
\]
where, furthermore:
\[
\begin{split}
\ep \int_0^\tau \intOe{ \vr |\vu - \vc{U}_\ep | }  & \leq \frac{\ep}{2}
\int_0^\tau \intOe{ \vr |\vu - \vc{U}_\ep |^2 } +  \frac{\ep}{2}
\int_0^\tau \intOe{ \vr } \\ & \leq C \left( \ep |\Ome| + \int_0^\tau \mathcal{E}
  \left( \vr, \vu \ \Big|\ \vr_E, \vc{U}_\ep \right)(t) \ \dt \right). 
\end{split}
\]

Next, setting $\tilde \vu := \vc{U}_\ep - \vu$ for notational convenience, we write:
\begin{align*}
 \tn{S} (\Grad \vc{U}_\ep ) : \Grad (\vc{U}_\ep - \vu ) &= \mu( \Grad
 \vc{U}_\ep + \Grad^t \vc{U}_\ep - \frac{2}{3}\Div \vc{U}_\ep \tn{I})
 : \Grad \tilde u + \eta \Div \vc{U}_\ep \Div \tilde u  \\
&= \frac{\mu}{2} ( \Grad \vc{U}_\ep + \Grad^t \vc{U}_\ep -
\frac{2}{3}\Div \vc{U}_\ep \tn{I}) : ( \Grad \tilde \vu + \Grad^t
\tilde \vu - \frac{2}{3} \Div \tilde \vu) + \eta \Div \vc{U}_\ep \Div
\tilde u, 
\end{align*}
where we used the fact that $\Grad \vc{U}_\ep + \Grad^t \vc{U}_\ep -
\frac{2}{3}\Div \vc{U}_\ep \tn{I}$ is symmetric and traceless to
smuggle in $\Grad^t \tilde \vu$ and $\frac{2}{3} \Div \tilde \vu$. In
a similar way, we observe that:
\begin{align*}
 \left( \tn{S}(\Grad \vu) - \tn{S}(\Grad \vc{U}_\ep ) \right) : \left(
   \Grad \vu - \Grad \vc{U}_\ep \right) = \tn{S}(\Grad \tilde \vu) :
 \Grad \tilde \vu = \frac{\mu}{2} \left| \Grad \tilde \vu + \Grad^t
   \tilde \vu - \frac{2}{3} \Div \tilde \vu \tn{I} \right|^2 + \eta
 |\Div \tilde \vu|^2, 
\end{align*}
and so, using the above, we may estimate:
\begin{align*}
& \left| \lambda \int_0^\tau \intOe{ \tn{S} (\Grad \vc{U}_\ep ) : \Grad
    (\vc{U}_\ep - \vu ) } \ \dt \right| \\
& \qquad\qquad \leq C\lambda \int_0^\tau
\intOe{ | \Grad \vc{U}_\ep |^2 }  + \frac{\lambda}{2} \int_0^\tau \intOe{
  \frac{\mu}{2} \left| \Grad \tilde \vu + \Grad^t \tilde \vu -
    \frac{2}{3} \Div \tilde \vu \tn{I} \right|^2 + \eta |\Div \tilde \vu|^2 }\\ 
& \qquad\qquad \leq C \lambda | \Ome | + \frac{\lambda}{2} \int_0^\tau \intOe{
  \left( \tn{S}(\Grad \vu) - \tn{S}(\Grad \vc{U}_\ep ) \right) :
  \left( \Grad \vu - \Grad \vc{U}_\ep \right) }. 
\end{align*}

Combining the previous estimates and a Gronwall-type argument, we conclude:
\begin{equation} \nonumber 
\mathcal{E}_\ep \left( \vr, \vu \ \Big|\ \vr_E, \vc{U}_\ep \right) (\tau) \leq C \left[ \left(\ep + \lambda \right) |\Ome| +
\mathcal{E}_\ep \left( \vr, \vu \ \Big|\ \vr_E, \vc{U}_\ep \right) (0)
\right] \qquad \forall 0 \leq \tau \leq T,
\end{equation}
from which we easily deduce (\ref{euler}). We have proved Theorem \ref{Tinv}. \qed

\subsection{Convergence to the Navier-Stokes system - the proof of
  Theorem \ref{Tvisc}} \label{CNS}

Proving similar estimates for the Navier-Stokes limit is more delicate. We start observing that (\ref{ns2}), \eqref{ns4} together with \eqref{ns5} and the coercivity property \eqref{coerc}, give rise to: 
\begin{equation} \label{C1}
\begin{split}
&\left[ \mathcal{E}_\ep \left( \vr, \vu \ \Big|\ \vr_{NS}, \vc{U}_\ep \right)(t) \right]_{t=0}^{t = \tau}
+ \int_0^\tau \intOe{ \left( \tn{S}(\Grad \vu) - \tn{S}(\Grad
    \vc{U}_\ep ) \right) : \left( \Grad \vu - \Grad \vc{U}_\ep \right)
}  \ \dt\\ 
& \leq C \int_0^\tau \mathcal{E}_\ep \left( \vr, \vu \ \Big|\ \vr_{NS},
  \vc{U}_\ep \right)(t) \ \dt + C \ep \int_0^\tau \intOe{ \vr |\vu -
  \vc{U}_\ep | } + C \ep \int_0^\tau \intOe{ |\vu - \vc{U}_\ep | } \\ 
&  + \int_0^\tau \intOe{ \frac{1}{\vr_{NS}} \Big( \vr - \vr_{NS} \Big)
  \left( \left( {4 \mu}/{3} + \eta \right) \partial^2_z u_{NS} +
    (\mu/3 + \eta)\partial_z ( \partial_z (\ln A) u_{NS} )
  \right) \Big( u_{NS} - u_3 \Big) } \dt, 
\end{split}
\end{equation}
where the integral in the last line, using the notation:
\[
F(z) := \left( (4 \mu/3 + \eta) \partial^2_z u_{NS} +(\mu/3 + \eta)\partial_z ( \partial_z (\ln A) u_{NS} ) \right),
\]
the fact that $|F| \le C$ and \eqref{coerc}, can be estimated by the following:
\[
\begin{split}
&\left| \intOe{ \frac{F}{\vr_{NS}} \Big( \vr - \vr_{NS} \Big) \Big( u_{NS} - u_3 \Big) } \right|\\
&\qquad\qquad \leq \left| \intOe{ \frac{F}{\vr_{NS}} \Big[ \vr
    - \vr_{NS} \Big]_{\rm ess}  \Big[ u_{NS} - u_3 \Big]_{\rm ess} }
\right| + \left| \intOe{ \frac{F}{\vr_{NS}} \Big[ \vr -
    \vr_{NS} \Big]_{\rm res} \Big[ u_{NS} - u_3 \Big]_{\rm res} }
\right|\\ 
& \qquad\qquad \leq C \left[ \mathcal{E}\left( \vr, \vu \ \Big|\ \vr_{NS}, \vc{U}_\ep \right) + C(\delta) \intOe{ \left|
[\vr - \vr_{NS} ]_{\rm res} \right| } \right] + 
\delta \intOe{ (1 + \vr) \left| [ \vu - \vc{U}_\ep ]_{\rm res} \right|^2 }
\\ & \qquad\qquad \leq C(\delta) \mathcal{E}\left( \vr, \vu \ \Big|\ \vr_{NS}, \vc{U}_\ep \right) +
\delta \intOe{ |\vu - \vc{U}_\ep |^2},
\end{split}
\]
for any $\delta > 0$.
Applying a similar treatment to the remaining integrals in (\ref{C1}), we obtain that:
\[
\begin{split}
&\left[ \mathcal{E}_\ep \left( \vr, \vu \ \Big|\ \vr_{NS}, \vc{U}_\ep \right)(t) \right]_{t=0}^{t = \tau}
+ \int_0^\tau \intOe{ \left( \tn{S}(\Grad \vu) - \tn{S}(\Grad \vc{U}_\ep ) \right) : \left( \Grad \vu - \Grad \vc{U}_\ep \right) }  \ \dt\\
&\qquad\qquad \leq C(\delta) \left[ \int_0^\tau
    \mathcal{E}\left( \vr, \vu \ \Big|\ \vr_{NS}, \vc{U}_\ep \right)(t) \
    \dt + \ep |\Ome| \right] + (\ep + \delta) \int_0^\tau \intOe{ |\vu - \vc{U}_\ep |^2 } \ \dt
\end{split}
\]
for any $\delta > 0$. Consequently, in order to conclude, we use the following variant of
\emph{Korn-Poincar\' e inequality}:
\begin{equation} \label{Korn}
\intOe{ \left| \Grad \vc{v} + \Grad^T \vc{v} \right|^2 } \geq C \intOe{ |\vc{v}|^2 },
\end{equation}
with a constant $C$ \emph{independent of} $\ep \to 0$, see Theorem
\ref{TKP} in Section \ref{K}. This allows to estimate
  $\int_0^\tau \intOe{ |\vu - \vc{U}_\ep |^2 }\ \dt$ with $\int_0^\tau
  \intOe{ \left( \tn{S}(\Grad \vu) - \tn{S}(\Grad \vc{U}_\ep ) \right)
    : \left( \Grad \vu - \Grad \vc{U}_\ep \right) }  \ \dt$. For that
to work we had to assume that the bulk viscosity coefficient $\eta$
is strictly positive, since otherwise \eqref{Korn} would need to be
replaced with its {conformal} version \eqref{CKP}.

Finally, as a consequence of a Gronwall-type argument, we obtain:
\begin{equation} \nonumber 
\mathcal{E}_\ep \left( \vr, \vu \ \Big|\ \vr_{NS}, \vc{U}_\ep \right) (\tau) \leq C \left[ \ep |\Ome| +
\mathcal{E}_\ep \left( \vr, \vu \ \Big|\ \vr_{NS}, \vc{U}_\ep \right)
(0) \right]\qquad \forall 0 \leq \tau \leq T,
\end{equation}
completing the proof of Theorem \ref{Tvisc}. \qed

\section{A Korn inequality in thin channels} \label{K}

In this section we discuss various variants of Korn and Korn-Poincar\'
e inequalities that may be of independent interest. In particular, we
show the Korn-Poincar\' e inequality (\ref{Korn}). We assume that:
\begin{equation}\label{CS}
\Omega_\ep = \left\{ x=(\ep \xh, z) \ \Big| \ z\in (0,1), \ \xh\in \omega_h(z)\right\}\subset \mathbb{R}^n,
\end{equation}
where $\{\omega_h(z)\}_{z\in [0,1]}$ is a uniformly Lipschitz family of simply connected bounded domains
$\omega(z) \subset \mathbb{R}^{n-1}$, such that the  boundary of
$\Omega_1$ is Lipschitz. We use the following notation: $\mathrm{sym~}
\mathbb{M} = \frac{1}{2} (\mathbb{M} + \mathbb{M}^t)$ and $\mathrm{skew}~
\mathbb{M} = \frac{1}{2} (\mathbb{M} - \mathbb{M}^t)$ for the
symmetric and the skew-symmetric parts of a given matrix
$\mathbb{M}\in \mathbb{R}^{n\times n}$, and $so(n)$ for the space of all
skew-symmetric matrices $\mathbb{M} = \mathrm{skew}~ \mathbb{M} \in \mathbb{R}^{n\times n}$.

\begin{Theorem} \label{TKP}
Let $\vc{v}\in W^{1,2}(\Ome; \mathbb{R}^n)$ satisfy:
\begin{equation}\label{BC}
\vc{v} \cdot \vc{n} |_{\partial \Ome} = 0, \quad\vc{v}(\xh, z) = 0
  \qquad \forall z \in \{ 0,1 \},\ \xh \in \ep \omega(z).
\end{equation}
Then, we have the following bounds with a constant $C$ independent of $\ep$ and $\vc{v}$:
\begin{equation}\label{ko1}
\intOe{|\Grad \vc{v}|^2} \leq \frac{C}{\ep^2} \intOe{|\mathrm{sym}\Grad \vc{v}|^2}
\end{equation}
\begin{equation}\label{ko2}
\intOe{|\vc{v}|^2} \leq C \intOe{|\mathrm{sym}\Grad \vc{v}|^2}.
\end{equation}
\end{Theorem}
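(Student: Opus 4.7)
The strategy is a reduction to the fixed Lipschitz domain $\Omega_1 = \{(\xh', z) : z \in (0,1),\ \xh' \in \omega_h(z)\}$ via the horizontal rescaling $\xh = \ep \xh'$. Setting $\tilde{\vc{v}}(\xh', z) := \vc{v}(\ep \xh', z)$ on $\Omega_1$, the boundary conditions (\ref{BC}) transform into $\tilde{\vc{v}}|_{z\in\{0,1\}} = 0$, together with an $O(\ep)$-perturbation of the tangentiality condition on the lateral boundary of $\Omega_1$, namely $\tilde{\vc{v}}_h \cdot \vc{n}_h^{(2D)} = \ep f\, \tilde v_3$ for a geometric factor $f$ depending on the slope of $\partial \omega_h(z)$. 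Direct computation gives the scaling identities $\int_{\Omega_\ep}|\vc{v}|^2\,\dx = \ep^{n-1}\int_{\Omega_1}|\tilde{\vc{v}}|^2$, $\int_{\Omega_\ep}|\nabla_h\vc{v}|^2\,\dx = \ep^{n-3}\int_{\Omega_1}|\nabla_{\xh'}\tilde{\vc{v}}|^2$, and $\int_{\Omega_\ep}|\partial_z\vc{v}|^2\,\dx = \ep^{n-1}\int_{\Omega_1}|\partial_z\tilde{\vc{v}}|^2$; the symmetric gradient decomposes into three contributions carrying different weights in $\ep$.

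The plan has three steps. First, apply the classical second Korn inequality on the fixed Lipschitz domain $\Omega_1$, whose constant depends only on the geometry of $\Omega_1$ and is therefore independent of $\ep$. Second, use the Dirichlet conditions $\tilde{\vc{v}}|_{z=0,1}=0$ to eliminate the infinitesimal-rigid-motion obstruction: any affine function $\mathbb{A}(\xh',z) + \vc{b}$ vanishing on two parallel horizontal hyperplanes is identically zero, so the sharp Korn--Poincar\'e inequality $\|\tilde{\vc{v}}\|_{W^{1,2}(\Omega_1)} \leq C \|\mathrm{sym}\nabla_{\xh'}\tilde{\vc{v}}\|_{L^2(\Omega_1)}$ holds uniformly in $\ep$. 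Third, translate this bound back to $\vc{v}$ via the scaling identities; the factor $\ep^{-2}$ in (\ref{ko1}) emerges as the precise scaling mismatch between $|\nabla\vc{v}|^2$ and the dominant part of $|\mathrm{sym}\nabla\vc{v}|^2$ under the change of variables, while in (\ref{ko2}) the two factors of $\ep^2$ coming from the volume element and the Poincar\'e bound cancel.

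The main obstacle is that the rescaled symmetric gradient is not simply a rescaling of any symmetric gradient on $\Omega_1$. Explicitly, the cross-block $(\mathrm{sym}\nabla\vc{v})_{j3} = \tfrac12(\partial_z v_j + \partial_{x_j}v_3)$ becomes $\tfrac{1}{2\ep}(\partial_{x'_j}\tilde v_3 + \ep\,\partial_z\tilde v_j)$ after rescaling, which differs from the cross-block of $\mathrm{sym}\nabla_{\xh'}\tilde{\vc{v}}$, while the horizontal-horizontal block and the $(n,n)$-entry acquire different weights in $\ep$. Consequently one cannot apply Korn on $\Omega_1$ directly to $\tilde{\vc{v}}$; one must either apply it to an anisotropically rescaled vector field (for instance $(\tilde{\vc{v}}_h, \alpha(\ep)\tilde{v}_3)$ with $\alpha(\ep)$ tuned to balance the mixed terms), or, what is cleanest, proceed by contradiction and compactness. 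Assuming (\ref{ko1}) or (\ref{ko2}) fails along a sequence $\ep_n \downarrow 0$ and $\vc{v}_n$ on $\Omega_{\ep_n}$ with appropriate normalization, rescale each $\vc{v}_n$ to a field $\tilde{\vc{v}}_n$ on $\Omega_1$, extract a weak $W^{1,2}(\Omega_1)$ limit $\tilde{\vc{v}}_\infty$, and use the vanishing of the limiting symmetric gradient together with the limiting BC ($\tilde{\vc{v}}_\infty|_{z=0,1}=0$ and tangentiality on the lateral boundary) to conclude that $\tilde{\vc{v}}_\infty$ is an infinitesimal rigid motion killed by the top/bottom conditions, hence zero—contradicting the normalization. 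The perturbative nature of the lateral BC must also be controlled in this limit, but since the perturbation is $O(\ep)$ and the traces converge strongly, it poses no additional difficulty.
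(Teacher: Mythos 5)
Your proposal takes a genuinely different route from the paper, but it contains gaps that prevent it from closing. The paper's proof (Theorems \ref{TK2}, \ref{poin2} and Section 5.3) is constructive: it builds, by applying classical Korn on $O(\ep)$-thick slabs $B_{z_0,\ep}$ of the channel and mollifying the fibrewise average of $\mathrm{skew}\nabla\vc{v}$, a slowly varying map $\mathbb{A}:[0,1]\to so(n)$ approximating $\nabla\vc{v}$ with the \emph{explicit} rate $\|\mathbb{A}\|^2_{W^{1,2}([0,1])}\leq \ep^{-2}\fint|\mathrm{sym}\nabla\vc{v}|^2$, which yields (\ref{ko1}); then (\ref{ko2}) is obtained by splitting $\vc{v}$ into its vertical part $v_n$ (controlled by a Poincar\'e inequality in $z$ along the tube $S_{r,\ep}$, using the Dirichlet condition at $z\in\{0,1\}$) and its horizontal part $\vc{v}-v_n e_n$ (controlled by the uniform tangential Poincar\'e inequality of Theorem \ref{poin2} on the $\ep$-scaled cross sections $\ep\omega_h(z)$, whose constant scales like $\ep^2$ and thus exactly cancels the $\ep^{-2}$ from (\ref{ko1})).

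Your compactness argument has two concrete problems. First, boundedness: with the natural normalization (say $\int_{\Omega_{\ep_n}}|\vc{v}_n|^2=\ep_n^{n-1}$ and $\ep_n^{-(n-1)}\int_{\Omega_{\ep_n}}|\mathrm{sym}\nabla\vc{v}_n|^2\to 0$), the rescaled field $\tilde{\vc{v}}_n$ satisfies $\|\tilde{\vc{v}}_n\|_{L^2(\Omega_1)}=1$ but the best a priori bound on $\partial_z\tilde{\vc{v}}_n$ from (\ref{ko1}) is $\int_{\Omega_1}|\partial_z\tilde{\vc{v}}_n|^2=o(\ep_n^{-2})$, which is not bounded; the classical Korn inequality on the fixed domain $\Omega_1$ applied to $\tilde{\vc{v}}_n$ does not help either, because the $(j,n)$-block of $\mathrm{sym}\nabla_{\xh'}\tilde{\vc{v}}_n$ is $\tfrac12(\partial_z\tilde v_{n,j}+\partial_{x'_j}\tilde v_{n,n})$ whereas your hypothesis controls $\tfrac12(\partial_z\tilde v_{n,j}+\ep_n^{-1}\partial_{x'_j}\tilde v_{n,n})$, a genuinely different combination. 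So you cannot extract a weak $W^{1,2}(\Omega_1)$ limit, and the argument does not start. Second, and more fundamentally, your identification of the obstruction is wrong: you claim the kernel consists of global affine maps $\mathbb{A}\xh'+\vc{b}$ killed by the Dirichlet conditions at $z\in\{0,1\}$, but the actual obstruction is the $z$-dependent family of cross-sectional rotations $\vc{v}(\xh,z)=(Q(z)\xh,0)$ with $Q:[0,1]\to so(n-1)$, $Q(0)=Q(1)=0$ (exactly the fields $\vc{v}^\ep$ of the paper's Example \ref{example1}). Those are \emph{not} killed by the boundary conditions (\ref{BC}), and they witness the unavoidable $\ep^{-2}$ blow-up in (\ref{ko1}); a uniform Korn inequality without that factor is simply false. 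The bound (\ref{ko2}) nevertheless holds because the rotations $Q(z)\xh$ carry small $L^2$ mass on the shrinking cross sections $\ep\omega_h(z)$ --- a \emph{quantitative} cancellation between the $\ep^2$ from the scaled tangential Poincar\'e inequality and the $\ep^{-2}$ in (\ref{ko1}) --- which a soft contradiction-compactness argument cannot detect. To make progress one really needs a constructive approximation in the spirit of \cite{FJM}, as the paper carries out, rather than a qualitative rescaling-and-compactness step.
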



\subsection{An approximation theorem}

Towards the proof of Theorem \ref{TKP}, we first recall the classical Korn's inequality:

\begin{Theorem} \label{korn}
Let $\Omega\subset \mathbb{R}^n$ be an open bounded connected and Lipschitz
domain. For every $\vc{v}\in W^{1,2}(\Omega; \mathbb{R}^n)$ there exists a
matrix $\mathbb{A}\in so(n)$ such that:
\begin{equation}\label{krn}
\intO{|\Grad \vc{v} - \mathbb{A}|^2} \leq C \intO{|\mathrm{sym}\Grad \vc{v}|^2}.
\end{equation}
The constant $C$ above depends only on the domain $\Omega$, but not on
$\vc{v}$. The constant is invariant under dilations of $\Omega$ and it
is uniform for the class of domains that are bilipschitz
equivalent with controlled Lipschitz constants.
\end{Theorem}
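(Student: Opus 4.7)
The plan is to prove Korn's inequality in two steps: first establish the auxiliary \emph{second Korn inequality}
$$\|\vc{v}\|_{W^{1,2}(\Omega)}^2 \leq C\bigl(\|\vc{v}\|_{L^2(\Omega)}^2 + \|\mathrm{sym}\,\Grad\vc{v}\|_{L^2(\Omega)}^2\bigr),$$
and then upgrade it to \eqref{krn} by a standard compactness argument modulo infinitesimal rigid motions.

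For the first step I start from the pointwise identity
$$\partial^2_{jk} v_i = \partial_j(\mathrm{sym}\,\Grad\vc{v})_{ik} + \partial_k(\mathrm{sym}\,\Grad\vc{v})_{ij} - \partial_i(\mathrm{sym}\,\Grad\vc{v})_{jk},$$
which shows that the distributional gradient of each component $\partial_k v_i$ lies in $H^{-1}(\Omega;\mathbb{R}^n)$. By the Lions--Ne\v{c}as lemma---valid precisely because $\Omega$ is bounded and Lipschitz---any distribution $f$ on $\Omega$ with $f\in H^{-1}(\Omega)$ and $\nabla f\in H^{-1}(\Omega;\mathbb{R}^n)$ automatically lies in $L^2(\Omega)$, with $\|f\|_{L^2}\leq C(\|f\|_{H^{-1}}+\|\nabla f\|_{H^{-1}})$. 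Applying this to $f=\partial_k v_i$ and using the trivial estimate $\|\partial_k v_i\|_{H^{-1}}\leq\|v_i\|_{L^2}$ yields the second Korn inequality.

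To obtain \eqref{krn}, I choose $\mathbb{A} := |\Omega|^{-1}\int_\Omega \mathrm{skew}\,\Grad\vc{v}\in so(n)$. Since $\mathrm{sym}\,\Grad\vc{v} = \mathrm{sym}(\Grad\vc{v}-\mathbb{A})$ and the orthogonal decomposition $|\Grad\vc{v}-\mathbb{A}|^2 = |\mathrm{sym}\,\Grad\vc{v}|^2 + |\mathrm{skew}\,\Grad\vc{v}-\mathbb{A}|^2$ holds pointwise, the claim reduces to controlling the zero-mean matrix field $\mathrm{skew}\,\Grad\vc{v}-\mathbb{A}$ by $\mathrm{sym}\,\Grad\vc{v}$ in $L^2$. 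I argue by contradiction: a normalized sequence $\vc{v}_k$ with $\|\mathrm{skew}\,\Grad\vc{v}_k-\mathbb{A}_k\|_{L^2}=1$ and $\|\mathrm{sym}\,\Grad\vc{v}_k\|_{L^2}\to 0$ can, after subtraction of an infinitesimal rigid motion, be taken $L^2$-orthogonal to the finite-dimensional space of such motions. The second Korn inequality together with Poincar\'e then bounds $\|\vc{v}_k\|_{W^{1,2}}$ uniformly; Rellich--Kondrachov extracts a subsequence $\vc{v}_k\to\vc{v}$ in $L^2$, and the weak $W^{1,2}$ limit satisfies $\mathrm{sym}\,\Grad\vc{v}=0$, so $\vc{v}$ is a rigid motion, hence $\vc{v}=0$ by the orthogonality. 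Reapplying the second Korn inequality to $\vc{v}_k$ forces $\Grad\vc{v}_k\to 0$ strongly in $L^2$, contradicting the normalization.

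Dilation invariance is transparent: under $x\mapsto\lambda x$ both sides of \eqref{krn} scale by the same factor $\lambda^{n-2}$, and the Lions--Ne\v{c}as constant on $\lambda\Omega$ agrees with that on $\Omega$. Bilipschitz invariance follows by a direct change of variables, distorting $|\Grad\vc{v}|^2$ and $|\mathrm{sym}\,\Grad\vc{v}|^2$ only by powers of $\|\nabla\Phi\|_{L^\infty}$ and $\|\nabla\Phi^{-1}\|_{L^\infty}$. The main obstacle is the Lions--Ne\v{c}as negative norm estimate on Lipschitz domains: it is the nontrivial analytic input, and the reason the Lipschitz hypothesis on $\partial\Omega$ is essential.
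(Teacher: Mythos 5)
The paper does not prove this theorem; it recalls it as classical and only records that the optimal choice is $\mathbb{A} = \mathrm{skew}\,\fint_\Omega \Grad\vc{v}\ \dx$. Your derivation of \eqref{krn} itself---the identity $\partial^2_{jk}v_i=\partial_j(\mathrm{sym}\Grad\vc{v})_{ik}+\partial_k(\mathrm{sym}\Grad\vc{v})_{ij}-\partial_i(\mathrm{sym}\Grad\vc{v})_{jk}$, the Lions--Ne\v cas negative-norm estimate yielding the second Korn inequality, and a compactness argument modulo infinitesimal rigid motions---is a standard and correct route, and the dilation argument (both sides of \eqref{krn} scale by $\lambda^{n-2}$) is fine, although the aside that the Lions--Ne\v cas constant is itself dilation-invariant is false and superfluous since the $H^{-1}$ norm mixes $L^2$ and $\dot H^1$ scalings. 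One should also tidy Step~2: after projecting $\vc{v}_k$ off the rigid motions, ``second Korn plus Poincar\'e'' as stated is circular, because Poincar\'e needs $\|\Grad\vc{v}_k\|_{L^2}$ bounded, which is not yet known; the clean fix is to subtract $\mathbb{A}_k x$ and a constant so that $\fint\mathrm{skew}\Grad\vc{v}_k=0=\fint\vc{v}_k$, whence $\|\Grad\vc{v}_k\|_{L^2}^2=\|\mathrm{sym}\Grad\vc{v}_k\|_{L^2}^2+\|\mathrm{skew}\Grad\vc{v}_k\|_{L^2}^2$ is bounded directly.

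The genuine gap is the bilipschitz uniformity, which does \emph{not} follow ``by a direct change of variables''. The symmetric gradient is not equivariant under non-conformal bilipschitz maps: if $\vc{v}(y)=\mathbb{B}y$ with $\mathbb{B}\in so(n)$ and $\Phi(x)=Ax$ with $A$ symmetric positive-definite, then $\mathrm{sym}\Grad\vc{v}\equiv 0$ while $\mathrm{sym}\Grad(\vc{v}\circ\Phi)=\tfrac12(\mathbb{B}A-A\mathbb{B})$, which is nonzero whenever $A$ does not commute with $\mathbb{B}$. So there is no pointwise bound $|\mathrm{sym}\Grad(\vc{v}\circ\Phi)|\leq C\,|\mathrm{sym}\Grad\vc{v}|\circ\Phi$, and the claimed ``distortion by powers of $\|\nabla\Phi\|$'' of $|\mathrm{sym}\Grad\vc{v}|^2$ simply does not hold. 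The uniformity over a bilipschitz class is a real theorem: one either makes the Lions--Ne\v cas (hence second Korn) constant quantitative via a uniform cone condition and a finite covering with controlled geometry, or runs the contradiction argument over a \emph{sequence} of uniformly Lipschitz domains, using precompactness of such domains and passing to the limit in the Korn quotient. This uniformity is precisely what the paper uses in Theorem~\ref{TK2}, where Korn's inequality is invoked with a single constant on the $\ep$- and $z_0$-dependent family $B_{z_0,\ep}$, so the missing step is exactly the part the rest of Section~5 relies upon.
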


It is easy to check that the optimal $\mathbb{A}$ in the left hand side
of (\ref{krn}) equals $\mathbb{A} = \mbox{skew} \fint_\Omega \Grad
  \vc{v}\ \dx$. Armed with this observation, we derive a fine
approximation of $\Grad \vc{v}$ that is suitable for the thin limit problem in
Theorem \ref{TKP}. This approach is motivated by a similar construction in \cite{FJM}.

\begin{Theorem} \label{TK2}
Let $\vc{v}\in W^{1,2}(\Ome; \mathbb{R}^n)$ satisfy the boundary conditions (\ref{BC}). Then, there
exists a smooth mapping $\mathbb{A}: [0,1]\to so(n)$ such that:
\begin{equation}\label{ap1}
\intOe{| \Grad \vc{v}(\xh,z) - \mathbb{A}(z)|^2 } \leq C \intOe{|\mathrm{sym} \Grad \vc{v} |^2 },
\end{equation}
\begin{equation}\label{ap2}
\int_0^1 |\mathbb{A}|^2 \ \dz + \int_0^1|\partial_z\mathbb{A}|^2 \ \dz
\leq \frac{C}{\ep^2} \fint_{\Ome} |\mathrm{sym} \Grad \vc{v} |^2 \ \dx,
\end{equation}
where the constant $C$ above is independent of $\ep$ and $\vc{v}$.
\end{Theorem}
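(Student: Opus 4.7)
The plan is to discretize the vertical direction. Partition $[0,1]$ into $N\sim 1/\ep$ intervals $I_i=[i\ep,(i+1)\ep]$ and correspondingly decompose $\Ome$ into slabs $S_i=\Ome\cap(\mathbb{R}^{n-1}\times I_i)$. Each $S_i$ is, up to a bi-Lipschitz change of variables with constants controlled uniformly by the Lipschitz regularity of the family $\{\omega_h(z)\}$, an $\ep$-dilation of a fixed reference domain of unit size. The scale-invariant form of Theorem~\ref{korn} then produces $\mathbb{A}_i\in so(n)$ satisfying
\begin{equation*}
\int_{S_i}|\Grad \vc{v}-\mathbb{A}_i|^2 \leq C\int_{S_i}|\mathrm{sym}\Grad\vc{v}|^2,
\end{equation*}
with $C$ independent of $i$ and $\ep$. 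Applying the same inequality on the ``double'' slab $S_i\cup S_{i+1}$ and comparing the resulting matrix with $\mathbb{A}_i$ and $\mathbb{A}_{i+1}$ via the triangle inequality, using $|S_j|\geq c\ep^n$ to convert $L^2$-closeness of affine fields into matrix closeness, yields the discrete Lipschitz bound
\begin{equation*}
|\mathbb{A}_{i+1}-\mathbb{A}_i|^2 \leq \frac{C}{\ep^n}\int_{S_i\cup S_{i+1}}|\mathrm{sym}\Grad\vc{v}|^2.
\end{equation*}

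I then define $\mathbb{A}(z)$ as the piecewise-linear interpolant of the values $\mathbb{A}_i$ placed at the midpoints $z_i=(i+\tfrac12)\ep$ (mollified at scale $\ep$ if genuine smoothness is desired); on each sub-interval $\partial_z\mathbb{A}$ is constant of magnitude $|\mathbb{A}_{i+1}-\mathbb{A}_i|/\ep$. Summing over $i$ and using $|\Ome|\sim\ep^{n-1}$ I obtain
\begin{equation*}
\int_0^1|\partial_z\mathbb{A}|^2\,\dz = \sum_i\frac{|\mathbb{A}_{i+1}-\mathbb{A}_i|^2}{\ep} \leq \frac{C}{\ep^{n+1}}\intOe{|\mathrm{sym}\Grad\vc{v}|^2} = \frac{C}{\ep^2}\fint_{\Ome}|\mathrm{sym}\Grad\vc{v}|^2\,\dx.
\end{equation*}
The approximation bound \eqref{ap1} then follows slab-by-slab: on $S_i$ one has $|\Grad\vc{v}-\mathbb{A}(z)|^2\leq 2|\Grad\vc{v}-\mathbb{A}_i|^2 + 2|\mathbb{A}_{i+1}-\mathbb{A}_i|^2$, and summation together with the jump estimate recovers \eqref{ap1}.

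The main obstacle is the $L^2(0,1)$ bound on $\mathbb{A}$ itself in \eqref{ap2}, for which the derivative bound above must be supplemented by an anchor. I use the Dirichlet trace $\vc{v}|_{z=0}=0$ on the bottom slab $S_0$. The affine (Korn--Poincar\'e) version of Theorem~\ref{korn} supplies $\mathbb{A}_0\in so(n)$ and $\vc{b}_0\in\mathbb{R}^n$ with $\int_{S_0}|\vc{v}-\mathbb{A}_0 x-\vc{b}_0|^2\leq C\ep^2\int_{S_0}|\mathrm{sym}\Grad\vc{v}|^2$. The trace inequality on the slab of thickness $\ep$ (whose $L^2$-norm term gains $1/\ep$ while the gradient term, scaled by $\ep$, is absorbed into the Korn bound), combined with the vanishing of $\vc{v}$ on $\{z=0\}\cap\Ome$, then gives
\begin{equation*}
\int_{\ep\omega_h(0)}|\mathbb{A}_0(\xh,0)+\vc{b}_0|^2\,\mathrm{d}\xh \leq C\ep\int_{S_0}|\mathrm{sym}\Grad\vc{v}|^2.
\end{equation*}
Rescaling $\xh\mapsto\ep\xh$ onto the fixed domain $\omega_h(0)$ turns the left-hand side into a positive-definite quadratic form in $\vc{b}_0$ and in the first $n-1$ columns of $\mathbb{A}_0$; extracting the coefficients and then invoking skew-symmetry to recover the last column yields $|\mathbb{A}_0|^2\leq C\ep^{-n}\int_{S_0}|\mathrm{sym}\Grad\vc{v}|^2$. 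A Poincar\'e inequality on $[0,1]$ anchored at $z=0$ finally combines with the derivative bound to produce $\int_0^1|\mathbb{A}|^2\,\dz\leq\frac{C}{\ep^2}\fint_{\Ome}|\mathrm{sym}\Grad\vc{v}|^2\,\dx$, completing \eqref{ap2}.
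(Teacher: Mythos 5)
Your proof is correct, and its skeleton --- Korn's inequality on slabs of thickness $\sim\ep$, a jump bound from comparing overlapping slabs, interpolation (or mollification) in $z$, and finally an anchor that exploits the Dirichlet condition --- coincides with the paper's. The discrete slab decomposition with a piecewise-linear interpolant is merely a reformulation of the paper's continuous family $B_{z_0,\ep}$ together with $\mathbb{A}=\kappa_\ep*\tilde{\mathbb{A}}$, $\tilde{\mathbb{A}}(z)=\fint_{\ep\omega_h(z)}\mathrm{skew}\,\Grad \vc{v}\ {\rm d}\xh$, the latter having the small advantage of being smooth from the outset. The genuinely different ingredient is the anchor step used for the $L^2$ bound in \eqref{ap2}. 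You stay on $\Ome$ and estimate $|\mathbb{A}_0|$ directly, combining the scaled affine Korn--Poincar\'e inequality on the bottom slab $S_0$ with a trace inequality and the positive-definiteness of the quadratic form $(M,b)\mapsto\int_{\omega_h(0)}|M\xh+b|^2\ {\rm d}\xh$; skew-symmetry then recovers the missing last column and yields $|\mathbb{A}_0|^2\le C\ep^{-n}\int_{S_0}|\mathrm{sym}\,\Grad \vc{v}|^2\ \dx$ (equivalently an $\ep^{-1}$ bound against $\fint_{\Ome}|\mathrm{sym}\,\Grad\vc{v}|^2\ \dx$, so the derivative term still governs the final $\ep^{-2}$). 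The paper bypasses this computation entirely: it extends $\vc{v}$ by zero to $z\in\mathbb{R}$, using both Dirichlet conditions, so that the mollified skew field $\mathbb{A}$ vanishes identically for $z\notin(-\ep,1+\ep)$; in particular $\mathbb{A}(-1)=\mathbb{A}(2)=0$, and the $L^2$ estimate drops out of the one-dimensional Poincar\'e inequality applied to $\mathbb{A}$ on $[-1,2]$. The extension-by-zero trick is shorter and needs no trace inequality; your route is more explicit about how the boundary trace is quantitatively exploited. Either is valid, and the only small repair your write-up needs is that on $S_i$ the interpolant $\mathbb{A}(z)$ involves both neighbors, so the pointwise bound for \eqref{ap1} should read $|\mathbb{A}(z)-\mathbb{A}_i|^2\le |\mathbb{A}_{i+1}-\mathbb{A}_i|^2+|\mathbb{A}_i-\mathbb{A}_{i-1}|^2$, which changes nothing after summation.
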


\begin{proof}
{\bf 1.} We identify $\vc{v}$ with its extension on an infinite
curvilinear cylinder as in (\ref{CS}) with $z\in \mathbb{R}$, where we put
$\omega_h (z) = \omega_h (0)$ for $z<0$, $\omega_h(z) = \omega_h(1)$ for $z>1$
and $\vc{v}(\xh, z) = 0$ for $z<0$ and $z>1$, { and $\xh \in \ep \omega (z)$. }
For each $z_0\in \mathbb{R}$, we define the sets:
\[
B_{z_0, \ep} = \left\{x= (\xh, z ) \ \Big| \ z \in (z_0 - \ep, z_0 + \ep), \ \xh \in \ep\omega_h(z) \right\},
\]
and the approximation fields:
\[
\tilde{\mathbb{A}} (z_0) = \fint_{\ep\omega_h(z)} \mathrm{skew}\Grad \vc{v} (\xh, z_0) \ {\rm d}\xh
\quad \mbox{ and } \quad {\mathbb{A}} = \kappa_\ep * \tilde{\mathbb{A}}
\]
by means of a convolution with a regularization kernel $\kappa_\ep = \kappa_\ep(z)$. We set $\kappa_\ep(z)
= \frac{1}{\ep}\kappa(\frac{z}{\ep})$ for some smooth nonnegative
$\kappa\in C^\infty_c$ supported in $(-\frac{1}{2},\frac{1}{2})$ and
with integral $1$. Note that $\mathbb{A}\in C_c^\infty (\mathbb{R}; so(n))$ and in particular:
\begin{equation}\label{zero}
\mathbb{A}(-1) = \mathbb{A}(2) = 0.
\end{equation}

Application of Korn's inequality (\ref{krn}) on sets $B_{z_0, \ep}$ gives:
\begin{equation} \label{uno}
\fint_{B_{z_0,\ep}} | \Grad \vc{v}  - \mathbb{A}_{z_0, \ep} |^2 \ \dx
\leq C \fint_{B_{z_0, \ep}} |\mathrm{sym}\Grad \vc{v} |^2 \ \dx,
\end{equation}
with a uniform constant $C$ (independent of $z_0$, $\ep$ and
$\vc{v}$) and some appropriate $\mathbb{A}_{z_0, \ep} \in so(n)$.
Note that for every $z'\in \mathbb{R}$ we have:
\[
\tilde{\mathbb{A}}(z') - \mathbb{A}_{z_0,\ep} = \fint_{\ep \omega_h(z')}
\mathrm{skew} \Grad \vc{v} - \mathbb{A}_{z_0, \ep} \ {\rm d}\xh
= \fint_{\ep \omega_h(z')} \Grad \vc{v}  - \mathbb{A}_{z_0, \ep} -
\mathrm{sym} \Grad \vc{v} \ {\rm d}\xh.
\]
Using the above for $z'\in(z_0 - \ep, z_0 + \ep)$
we obtain, in view of (\ref{uno}):
\begin{equation}\label{due}
\begin{split}
\left| \mathbb{A} (z'') -  \mathbb{A}_{z_0, \ep}\right|^2 & = \left|
( \kappa_\ep * (\tilde{ \mathbb{A} } - \mathbb{A}_{z_0, \ep}) ) (z'')\right|^2
\leq C \fint_{z''-\ep/2}^{z''+\ep/2} |\tilde{ \mathbb{A} }(z') -
\mathbb{A}_{z_0, \ep} |^2 \ \mathrm{d}z' \\ & \leq C
\fint_{z''-\ep/2}^{z''+\ep/2} \fint_{\ep\omega_h(z')}|\Grad \vc{v} -
\mathbb{A}_{z_0, \ep}  - \mathrm{sym} \Grad \vc{v}|^2 \ \mathrm{d}\xh
\ \mathrm{d}z' \\ & \leq C \fint_{B_{z_0, \ep}} |\Grad \vc{v} -
\mathbb{A}_{z_0, \ep} |^2 + |\mathrm{sym} \Grad \vc{v}|^2\ \dx \\ & \leq C
\fint_{B_{z_0, \ep}} |\mathrm{sym} \Grad \vc{v}|^2\ \dx
\qquad\qquad \forall z''\in (z_0 -\frac{\ep}{2}, z_0 + \frac{\ep}{2}).
\end{split}
\end{equation}
Similarly, we deal with the derivative $\partial_z { \mathbb{A} }$:
\begin{equation}\label{tre}
\begin{split}
\left| \partial_z\mathbb{A} (z'') \right|^2 & = \left|
\partial_z( \kappa_\ep * (\tilde{ \mathbb{A} } - \mathbb{A}_{z_0,
  \ep}) ) (z'')\right|^2 = \left| ((\partial_z\kappa_\ep) * (\tilde{ \mathbb{A} } - \mathbb{A}_{z_0,
  \ep}) ) (z'')\right|^2 \\ & \leq \frac{C}{\ep^2}
\fint_{B_{z_0, \ep}} |\mathrm{sym} \Grad \vc{v}|^2\ \dx
\qquad\qquad \forall z''\in (z_0 -\frac{\ep}{2}, z_0 + \frac{\ep}{2}).
\end{split}
\end{equation}

\smallskip

{\bf 2.} We now estimate, by (\ref{uno}) and (\ref{due}):
\[
\begin{split}
\fint_{B_{z_0, \ep/2}} |\Grad \vc{v}(\xh, z) - \mathbb{A}(z)|^2 \ \dx & \leq C
\Big( \fint_{B_{z_0, \ep/2}} |\Grad \vc{v} - \mathbb{A}_{z_0, \ep}|^2\ \dx +
\fint_{z_0-\ep/2}^{z_0+\ep/2} |\mathbb{A}(z'') - \mathbb{A}_{z_0, \ep}
|^2 \ \mathrm{d}z'' \Big)  \\ &  \leq C \Big( \fint_{B_{z_0, \ep}} |\Grad \vc{v} - \mathbb{A}_{z_0,  \ep}|^2 \ \dx +
\fint_{z_0-\ep/2}^{z_0+\ep/2} \fint_{B_{z_0, \ep}} |\mathrm{sym} \Grad
\vc{v}|^2 \ \dx  \ \mathrm{d}z''\Big) \\ & \leq C \fint_{B_{z_0, \ep}} |\mathrm{sym} \Grad
\vc{v}|^2 \ \dx,
\end{split},
\]
which implies (\ref{ap1}) through an easy covering argument.
Likewise, (\ref{tre}) yields:
\[
\begin{split}
\fint_{z_0-\ep/2}^{z_0+\ep/2} |\partial_z\mathbb{A}(z'')|^2 \ \mathrm{d}z''   \leq \frac{C}{\ep^2}
\fint_{z_0-\ep/2}^{z_0+\ep/2} \fint_{B_{z_0, \ep}} |\mathrm{sym} \Grad
\vc{v}|^2 \ \dx  \ \mathrm{d}z'' = \frac{C}{\ep^2} \fint_{B_{z_0, \ep}} |\mathrm{sym} \Grad
\vc{v}|^2 \ \dx,
\end{split}
\]
and a further covering argument results in:
\[
\int_{-1}^{2}|\partial_z\mathbb{A}|^2 \ \dz \leq \frac{C}{\ep^2} \fint_{\Ome} |\mathrm{sym} \Grad \vc{v} |^2\ \dx.
\]
Using Poincar\'e's inequality to the function $\mathbb{A}$ and noting (\ref{zero}), we finally obtain (\ref{ap2}).
\end{proof}

\subsection{A uniform Poincar\'e inequality for vector fields}

In the proof of Theorem \ref{TKP} we need yet another result, which
is a Poincar\'e inequality for vector fields that are tangent on the
boundary of $\omega_h(z)$ (see \eqref{CS}), and with constant independent of
$z\in[0,1]$. Let us point out that there are many results \cite{2,1,3}
regarding the dependence of $C$ on  an open bounded connected and
Lipschitz set $\Omega\subset \mathbb{R}^n$ in:
\begin{equation}\label{poin}
\int_\Omega |v - \fint_\Omega v |^2 \ \dx \leq C \int_\Omega |\Grad
v|^2 \ \dx \qquad \forall v\in W^{1,2}(\Omega).
\end{equation}
These results are linked to the fact that the smallest $C$ in
(\ref{poin}) is the inverse of the first nonzero eigenvalue
$\lambda_2$ of the Neumann problem for $-\Delta$ on $\Omega$. It is then
known \cite{1}, that $\lambda_2 \geq C_n
\frac{r^n}{\bar r^{n+2}}$ where $r$ and $\bar r$ are the inner and outer radii of
the star-shaped $\Omega$.

Further, in \cite{2} it has been proved that  (\ref{poin}) is valid
with $C$ that is uniform for all $\Omega$  which are uniformly
Lipschitz  with uniformly bounded diameter.
More precisely, $C$ depends only on constants $n$, $\bar r$, $\gamma$ and $M$ below,
for any open and connected $\Omega$ satisfying the following two conditions:
\begin{itemize}
\item[(i)] $\Omega$ is a subset of the ball $B(0,\bar r)\subset \mathbb{R}^n$.
\item[(ii)] At each point $x\in\partial\Omega$ there exists a local
  orthonormal coordinate system such that writing, in this system, $x=(\hat{x}, x^n)$
we have the following. There exists a Lipschitz function $\phi:\hat{
\mathcal{O}} \to \mathbb{R}$ with Lipschitz constant $M$ and we have:
$$ z=(\hat z, z^n)\in\mathcal{O}\cap \Omega \qquad \mbox{if and only
  if} \qquad z\in\mathcal{O} \ \mbox{ and } \ z^n>\phi(\hat z),$$
where we denoted:
\[
\begin{split}
& \hat{\mathcal{O}} = \big\{\hat z\in \mathbb{R}^{n-1} \ \big | \ | (\hat z -\hat x) \cdot e_i |<\gamma \
\mbox{ for all } i=1, \ldots, n-1\big\} \\ &
\mathcal{O} = \big\{z=(\hat z, z^n)\in \mathbb{R}^n \ \big | \ \hat z\in
\hat{\mathcal{O}} \ \mbox{ and } \ |z^n - x^n|<M\gamma\sqrt{n-1}\big\}.
\end{split}
\]
\end{itemize}
Note that boundary of each $\Omega$ as above is uniformly Lipschitz continuous.
This results had been recently extended in \cite{3} to more general
classes of domains, that are uniformly bounded in:
the diameter, the interior cone condition, and an appropriate measure of connectedness.

We now deduce the needed vectorial Poincar\'e inequality:

\begin{Theorem}\label{poin2}
Let $\Omega\subset \mathbb{R}^n$ be an open connected domain satisfying
conditions (i) and (ii) above. Let $\vc{v}\in W^{1,2}(\Omega; \mathbb{R}^n)$ satisfy $
\vc{v} \cdot \vc{n} |_{\partial\Omega} = 0$. Then:
\begin{equation}\label{poin3}
\int_\Omega |\vc{v} |^2 \ \dx \leq C \int_\Omega |\Grad \vc{v}|^2 \ \dx,
\end{equation}
with $C$ independent of $\vc{v}$ and depending on $\Omega$ only
through $n$, $\bar r$, $\gamma$ and $M$.
\end{Theorem}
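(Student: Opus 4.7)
\medskip

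\noindent\textbf{Proof plan for Theorem \ref{poin2}.} The natural approach is a two-step reduction to the scalar case. First, I would apply the scalar Poincaré inequality (\ref{poin}) componentwise: writing $\vc{v}=(v_1,\dots,v_n)$ and $\bar v_i = \fint_\Omega v_i\,\dx$, one has
\[
\int_\Omega |v_i-\bar v_i|^2\,\dx \leq C\int_\Omega|\Grad v_i|^2\,\dx \qquad \forall i=1,\dots,n,
\]
with a constant $C$ depending only on $n$, $\bar r$, $\gamma$, $M$, by the uniform scalar Poincaré result cited from \cite{2}. Summing in $i$ gives $\int_\Omega |\vc v - \bar{\vc v}|^2\,\dx \leq C\int_\Omega|\Grad \vc{v}|^2\,\dx$, so the issue is reduced to controlling $|\Omega|\,|\bar{\vc v}|^2$ by $\int_\Omega|\Grad\vc v|^2\,\dx$ in a manner uniform in the class of admissible domains.

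This is where the boundary condition $\vc{v}\cdot\vc{n}|_{\partial\Omega}=0$ enters crucially. For each $i$, take $\phi(x)=x_i$ as a test function in the divergence identity: since $\vc{v}\cdot\vc{n}=0$ on $\partial\Omega$ in the sense of traces, the integration by parts yields
\[
\int_\Omega v_i\,\dx \;=\; \int_\Omega \Grad x_i\cdot \vc{v}\,\dx \;=\; -\int_\Omega x_i\,\Div\vc{v}\,\dx.
\]
Using $\Omega\subset B(0,\bar r)$ and Cauchy--Schwarz, together with $|\Div\vc{v}|\leq \sqrt{n}\,|\Grad\vc{v}|$, this gives
\[
\Big|\int_\Omega v_i\,\dx\Big|^2 \leq \bar r^{\,2}\,|\Omega|\,\|\Div\vc{v}\|_{L^2(\Omega)}^2 \leq n\bar r^{\,2}\,|\Omega|\int_\Omega|\Grad\vc{v}|^2\,\dx,
\]
so that $|\Omega|\,|\bar{\vc v}|^2 = \sum_i |\Omega|^{-1}\big(\int_\Omega v_i\big)^2 \leq n^2\bar r^{\,2}\int_\Omega|\Grad\vc{v}|^2\,\dx$, with no inverse-volume factor surviving.

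Combining the two estimates through $\int_\Omega|\vc v|^2\leq 2\int_\Omega|\vc v-\bar{\vc v}|^2 + 2|\Omega|\,|\bar{\vc v}|^2$ yields (\ref{poin3}) with a constant depending only on $n$, $\bar r$, $\gamma$, $M$, which is the claim. I do not anticipate a serious obstacle here: the scalar Poincaré with the required uniformity is imported as a black box from \cite{2}, and the only non-routine move is the choice of the coordinate functions $x_i$ as test functions, whose sole role is to convert the tangential boundary condition into an $L^2$ control of the mean by $\|\Div\vc v\|_{L^2}$ that is dimensionally consistent and free of any dependence on the (possibly small) volume $|\Omega|$.
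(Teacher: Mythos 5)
Your argument is correct and takes a genuinely different, and in fact more elementary, route than the paper. The paper first establishes a geometric inequality
\[
|\vc{a}|^2 \le C \int_{\partial\Omega} |\vc{a}\cdot\vc{n}|^2 \qquad \forall\,\vc{a}\in\mathbb{R}^n,
\]
valid uniformly over the admissible class of domains via a Clarke-subdifferential argument on the Lipschitz graphs; it then applies this to $\vc{a}=\fint_\Omega\vc{v}$, uses the tangency condition to replace $(\fint_\Omega\vc{v})\cdot\vc{n}$ by $(\vc{v}-\fint_\Omega\vc{v})\cdot\vc{n}$ on $\partial\Omega$, and controls the resulting boundary integral through the trace theorem and the scalar Poincar\'e inequality of \cite{2}. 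You instead convert the boundary condition into an interior identity by integrating by parts against $\phi(x)=x_i$, obtaining $\int_\Omega v_i\,\dx = -\int_\Omega x_i\,\Div\vc{v}\,\dx$, which together with $\Omega\subset B(0,\bar r)$ and Cauchy--Schwarz bounds $|\Omega|\,|\fint_\Omega\vc{v}|^2$ by $n^2\bar r^2\int_\Omega|\Grad\vc{v}|^2\,\dx$ directly, with no trace theorem and no need for the auxiliary geometric inequality. Both proofs import the same uniform scalar Poincar\'e inequality from \cite{2} as a black box and both deliver a constant depending only on $n$, $\bar r$, $\gamma$, $M$; yours is shorter and cleaner, with the incidental benefit that the mean is controlled by $\|\Div\vc{v}\|_{L^2}$ rather than the full gradient. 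The only point worth making explicit in a write-up is that the integration by parts is legitimate here: since $x_i$ is smooth and bounded on $\Omega\subset B(0,\bar r)$ and $\vc{v}\in W^{1,2}(\Omega;\mathbb{R}^n)$, the field $x_i\vc{v}$ lies in $W^{1,1}(\Omega;\mathbb{R}^n)$, and the normal trace $\vc{v}\cdot\vc{n}=0$ (in the appropriate weak sense on the Lipschitz boundary) gives $\int_{\partial\Omega} x_i\,\vc{v}\cdot\vc{n}=0$.
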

\begin{proof}
It is easy to note that conditions (i) and (ii) ensure the following uniform bound:
\begin{equation}\label{new}
|\vc{a}|^2 \leq C\int_{\partial\Omega} \left| \vc{a} \cdot \vc{n} \right|^2 \qquad \forall \vc{a} \in \mathbb{R}
^n.
\end{equation}
Indeed, sliding the plane perpendicular to $\vc{a}$ along the
direction $\vc{a}$, at the first point $x\in\partial\Omega$ where this
plane touches the boundary, vector $\vc{a}$ has scalar product bounded
away from zero, with every element of Clarke's subdifferential of $\phi$ at
$\hat x$. Consequently, $C$ in (\ref{new}) depends only on $n$, $\gamma$ and $M$.

Applying (\ref{new}) to the vector $\vc{a}=\fint_\Omega \vc{v}\ \dx$,
we get the following chain of uniform inequalities:
\[
\big|\fint_\Omega \vc{v}\big|^2 \leq C
\int_{\partial\Omega}\big( (\fint_\Omega \vc{v}) \cdot
\vc{n}\big)^2 = C
\int_{\partial\Omega}\big( ( \vc{v} - \fint_\Omega \vc{v}) \cdot \vc{n}\big)^2 \leq C
\int_{\partial\Omega}\big|\vc{v} - \fint_\Omega \vc{v}\big|^2 \leq C \|\vc{v} - \fint_\Omega \vc{v}\|^2_{W^{1,2}(\Omega; \mathbb{R}^n)},
\]
where the last bound follows from the trace theorem
\cite{Adams}. The quoted above result in \cite{2} now implies:
\[
\big|\fint_\Omega \vc{v} \ \dx\big|^2 \leq C \Big( \|\vc{v} -
\fint_{\Omega} \vc{v}\|^2_{L^2(\Omega;\mathbb{R}^n)} + \|\nabla_x \vc{v}
\|^2_{L^2(\Omega;\mathbb{R}^n)}\Big) \le C \int_\Omega |\Grad \vc{v}|^2 \ \dx,
\]
resulting in:
\[
\int_\Omega |\vc{v} |^2 \ \dx \leq 2 \int_\Omega \big|\vc{v} -
\fint_\Omega \vc{v}|^2\ \dx + 2|\Omega|\big|\fint_\Omega \vc{v}\ \dx\big|^2
\leq C \int_\Omega |\Grad \vc{v}|^2 \ \dx
\]
and establishing the proof.
\end{proof}

\begin{Remark} Note that the uniformity assumptions of Theorem \ref{poin2} clearly hold for the
family of cross sections $\{\omega_h(z)\}_{z\in [0,1]}$ because $\Omega_1$ is Lipschitz.
In this case, one can alternatively deduce (\ref{poin3}) by an argument by contradiction that we now sketch.

Assume that there was a sequence $z_k\in [0,1]$, converging to some $z_0$
and such that $\int_{\omega_h(z_k)}|\vc{v}_k|^2 = 1$ but $\int_{\omega_h(z_k)} |\Grad \vc{v}_k|^2\leq 1/k$
for some vector fields $\vc{v}_k\in W^{1,2}(\omega_h(z_k);
\mathbb{R}^{n-1})$ each tangential on the boundary $\partial\omega_h(z_k)$ of its own domain.
The uniform Lipschitz continuity of $\Omega_1$ ensures that
extending $\vc{v}_k$ on the large ball $B = B(0, \bar r)$ that contains all sets $\omega_h(z_k)$, still
obeys the uniform bound $\|\vc{v}_k\|_{W^{1,2}(B;\mathbb{R}^{n-1})}\leq C$.
Thus without loss of generality $\vc{v}_k$ converges to some
$\vc{v}_0$, weakly in $W^{1,2}(B; \mathbb{R}^{n-1})$. Existence of the Lipschitz continuous
homotopy between sets $\omega_h(z_n)$ allows now to deduce that
this implies $\int_{\omega_h(z_0)} |\Grad \vc{v}_0|^2 = 0 $ and
$\vc{v}_0 \cdot \vc{n} = 0$ on
$\partial\omega_h(z_0)$. Consequently, $\vc{v}_0=0$ in $\omega_h(z_0)$,
contradicting  the assumption $\int_{\omega_h(z_k)}|\vc{v}_k|^2 = 1$.
\end{Remark}

\subsection{The proof of Theorem \ref{TKP}}

Let $\mathbb{A}:[0,1]\to so(n)$ be the approximation function in
Theorem \ref{TK2}. Using \eqref{ap1} and \eqref{ap2} we get:
\[
\fint_{\Ome} |\Grad \vc{v} |^2 \ \dx \leq C \Big( \fint_{\Ome} |\Grad
\vc{v}(\xh, z) - \mathbb{A}(z) |^2 \ \dx + \int_0^1 |\mathbb{A}(z)|^2 \ \dz\Big)
\leq \frac{C}{\ep^2} \fint_{\Ome} |\mathrm{sym} \Grad \vc{v} |^2 \ \dx,
\]
which establishes (\ref{ko1}).
Towards proving (\ref{ko2}), define for a smooth curve $X:[0,1]\to \mathbb{R}^{n-1}$ such that
$(X(z),z)\in\Omega_1$ for all $z\in(0,1)$, the following set:
\[
S_{r,\ep} = \left\{ x=(\xh, z) \ \Big | \ z\in (0,1), \ \xh\in \ep  B(X(z), r)\right\}.
\]
Clearly, $S_{r,\ep}\subset\Omega_\ep$ for a sufficiently small $r>0$.
We have the following Poincar\'e inequality:
\[
\fint_{\Omega_1} |{v}|^2 \ \dx \leq C \Big( \fint_{\Omega_1} |\Grad
{v}|^2 \ \dx + \fint_{S_{r,1}} |{v}|^2 \ \dx\Big) \qquad
\forall  v\in W^{1,2}(\Omega_1),
\]
which by an easy scaling argument translates to:
\begin{equation}\label{jeden}
\fint_{\Ome} |{v}|^2 \ \dx \leq C \Big( \ep^2\fint_{\Ome} |\Grad
{v}|^2 \ \dx + \fint_{S_{r,\ep}} |{v}|^2 \ \dx\Big) \qquad
\forall  v\in W^{1,2}(\Ome).
\end{equation}

If additionally the scalar function $v$ obeys: $v(\cdot, 0) = v(\cdot, 1)=0$, then the change of
variables and the Poincar\'e inequality on $[0,1]$ yield:
\begin{equation}\label{dwa}
\begin{split}
\fint_{S_{r,\ep}} |{v}|^2 \ \dx & = \int_0^1 \fint_{\ep B(X(z), r)}|{v}|^2 \ \mathrm{d}\xh \ \dz =
\fint_{\ep B(0, r)} \int_0^1|v(\xh+\ep X(z), z)|^2 \ \dz \
\mathrm{d}\xh \\ & \leq C
\fint_{\ep B(0, r)} \int_0^1|\partial_z v +\ep (\nabla_{\vc{x}_h}v)\partial_z X |^2 \ \dz \ \mathrm{d}\xh
\leq C \fint_{S_{r,\ep}} |\partial_z{v}|^2 + \ep^2 |\Grad v|^2 \ \dx,
\end{split}
\end{equation}
 where $\nabla_{\vc{x}_h}v$ denotes the derivative of $v$ in the horizontal
directions in $\vc{x}_h$. Applying (\ref{jeden}) and (\ref{dwa}) to
$v= \vc{v} \cdot e_n $ results now in the following bound, in view of the already proven (\ref{ko1}):
\begin{equation}\label{trzy}
\begin{split}
\fint_{\Ome}  ( \vc{v} \cdot e_n )^2 \ \dx & \leq C
\Big( \fint_{\Ome} |\partial_z ( \vc{v} \cdot e_n )|^2 \ \dx + \ep^2 \fint_{\Ome}|\Grad v|^2 \ \dx
\Big) \leq C \fint_{\Ome} |\mathrm{sym} \Grad \vc{v} |^2 \ \dx.
\end{split}
\end{equation}

Further, we note that for almost every $z\in [0,1]$, the vector field
$\vc{v} - (\vc{v} \cdot e_n ) e_n\in W^{1,2}(\ep\omega_h(z);
\mathbb{R}^{n-1})$ is tangential on the boundary $\partial(\ep\omega_h(z))$ and
thus we may apply the uniform Poincar\'e inequality in Theorem \ref{poin2} whose constant on the domain
$\ep\omega(z)$ scales like $\ep^2$ with respect to the constant on the domain
$\omega(z)$. Consequently:
\begin{equation}\label{cztery}
\begin{split}
 \fint_{\Ome} |\vc{v} - ( \vc{v}\cdot e_n ) e_n|^2 \ \dx & \leq C
 \int_0^1 \fint_{\ep\omega(z)} |\vc{v} - (\vc{v}\cdot e_n) e_n|^2
 \ \mathrm{d}\xh \ \dz \\ & \leq C\ep^2 \int_0^1 \fint_{\ep\omega(z)} |\nabla_{\xh}\vc{v}|^2
 \ \mathrm{d}\xh \ \dz \leq C \fint_{\Ome} |\mathrm{sym} \Grad \vc{v} |^2 \ \dx,
\end{split}
\end{equation}
where we used (\ref{ko1}) in the last inequality above. Now,
(\ref{cztery}) and (\ref{trzy}) imply (\ref{ko2}) as claimed.
\endproof

\subsection{An optimal Korn inequality for channels with circular cross sections}

Let us point out that the Korn constant in (\ref{ko1})
blows up, in general, at the rate $\frac{C}{\ep^2}$ which is due to a positive measure
set $\mathcal{C}\subset [0,1]$ where each cross section $\omega_h(z)$ with $z\in \mathcal{C}$ has
a rotational symmetry.

\begin{Example}\label{example1}
Given two Lipschitz functions: $X:[0,1]\to \mathbb{R}^{n-1}$ and a positive
$r:[0,1]\to {(0,\infty)}$, let each set $\omega(z)$ be a ball given by:
\begin{equation}\label{CS2}
\omega(z) = B(X(z), r(z))\subset \mathbb{R}^{n-1}.
\end{equation}
For some nonzero function $Q\in W^{1,2}((0,1), so(n-1))$ satisfying
$Q(0) = Q(1) = 0$, consider the following vector fields:
\begin{equation}\label{CS3}
\vc{v}^\ep(\xh, z) = \Big(Q(z)\big(\xh - \ep X(z)\big), 0\Big).
\end{equation}
Note that $\vc{v}^\ep\in W^{1,2}(\Ome; \mathbb{R}^n)$ and it automatically satisfies the boundary conditions (\ref{BC}).
The non-zero entries of the matrix $\Grad \vc{v}^\ep$ are grouped in
its principal minor of dimension $(n-1)$, and its $n$-th column, that are given by:
\[
[\Grad \vc{v}^\ep]_{(n-1)\times(n-1)} = Q(z), \qquad
\partial_z \vc{v}^\ep = \Big((\partial_z Q)\big(\xh-\ep X(z)\big) -\ep Q\partial_zX, 0\Big)
\]
Consequently:
\[
\fint_{\Ome} |\Grad \vc{v}^\ep|^2 \ \dx\geq \fint_{\Ome} |Q(z)|^2 \ \dx \geq c
\quad \mbox{ and } \quad \fint_{\Ome} |\mathrm{sym} \Grad \vc{v}^\ep|^2 \
\dx \leq C\ep^2.
\]
\end{Example}

We will now show that under assumption (\ref{CS2}) the blow-up of
Korn's constant is precisely due to the presence of vector fields
$\vc{v}^\ep$ in Example \ref{example1}. The result below, although not needed for
the fluid dynamics discussion of the present paper, is of independent
interest and should be compared with paper \cite{LM} where
an optimal Korn's inequality was derived for thin $n$-dimensional
shells around a compact boundaryless $(n-1)$-dimensional mid-surface.

\begin{Theorem}\label{opti}
Let $\Omega_\ep$ be as in (\ref{CS}) with $\omega_h(z)$ given in
(\ref{CS2}) by Lipschitz functions: $X:[0,1]\to \mathbb{R}^{n-1}$ and
$r:[0,1]\to {(0,\infty)}$. Define $\vc{v}_Q^\ep$ by (\ref{CS3}), for every $Q\in \mathcal{I}$ where:
\[
\mathcal{I} = \left\{Q\in W^{1,2}([0,1]; so(n-1)) \ \Big| \ Q(0) = Q(1) = 0\right\}.
\]
Let $\alpha\in [0,1)$. Then for every $\vc{v}\in W^{1,2}(\Ome; \mathbb{R}^n)$
satisfying the boundary conditions (\ref{BC}) and:
\begin{equation}\label{CS4}
\left|\int_{\Ome} 
\Grad \vc{v} : \Grad \vc{v}_Q^\ep 
\  \dx\right|
\leq \alpha \|\Grad \vc{v}\|_{L^2(\Ome)} \|\Grad \vc{v}_Q^\ep\|_{L^2(\Ome)} \qquad \forall Q\in\mathcal{I},
\end{equation}
there holds:
\begin{equation}\label{korngood}
\int_{\Ome} |\Grad \vc{v}|^2 \  \dx\leq \frac{C}{1-\alpha^2} \int_{\Ome} |\mathrm{sym}\Grad \vc{v}|^2 \  \dx,
\end{equation}
with a constant $C$ independent of $\vc{v}$, $\ep$ and $\alpha$.
\end{Theorem}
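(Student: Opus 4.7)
The plan is to invoke the approximation Theorem~\ref{TK2}, which delivers a skew-symmetric matrix field $\mathbb{A}:[0,1]\to so(n)$ with
\[
\fint_\Ome \bigl|\Grad\vc{v} - \mathbb{A}(z)\bigr|^2 \, \dx \leq C \fint_\Ome |\mathrm{sym}\Grad\vc{v}|^2 \, \dx, \qquad
\int_0^1 \bigl( |\mathbb{A}|^2 + |\partial_z \mathbb{A}|^2 \bigr)\, \dz \leq \frac{C}{\ep^2}\, \fint_\Ome |\mathrm{sym}\Grad\vc{v}|^2 \, \dx,
\]
and to decompose it according to the horizontal/vertical splitting as
\[
\mathbb{A}(z) = \begin{pmatrix} Q_0(z) & a(z) \\ -a(z)^t & 0 \end{pmatrix}, \qquad Q_0(z) \in so(n-1),\ a(z) \in \mathbb{R}^{n-1}.
\]
The $Q_0$-block encodes the infinitesimal rigid rotations of cross sections---the exact source of the potential $1/\ep^2$ blow-up of the Korn constant witnessed by the modes $\vc{v}_Q^\ep$---and the orthogonality hypothesis (\ref{CS4}) is designed to suppress precisely this block.

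First I would test (\ref{CS4}) against $Q = Q_0$, after a small cutoff of $Q_0$ near $z \in \{0,1\}$ bringing it into $\mathcal{I}$ (by construction the field $\mathbb{A}$ from Theorem~\ref{TK2} is supported in a slightly enlarged interval and essentially vanishes near the endpoints up to a controlled error, so this cutoff produces only lower-order perturbations in the subsequent estimates). Expanding
\[
\int_\Ome \Grad \vc{v} : \Grad \vc{v}_{Q_0}^\ep \, \dx = \int_\Ome \mathbb{A}(z):\Grad \vc{v}_{Q_0}^\ep \, \dx + \int_\Ome (\Grad \vc{v} - \mathbb{A}) : \Grad \vc{v}_{Q_0}^\ep \, \dx,
\]
the principal-block contribution to the first term equals $\int_\Ome |Q_0(z)|^2 \, \dx \geq c\ep^{n-1} \|Q_0\|_{L^2(0,1)}^2$, while the off-diagonal cross-term pairing $a(z)$ against $\partial_z \vc{v}_{Q_0}^\ep$ is only $O(\ep^n \|a\|_{L^2}\|Q_0\|_{L^2})$---the key cancellation $\int_{\ep\omega(z)}(\xh-\ep X(z)) \, {\rm d}\xh = 0$ being available thanks to the ball symmetry (\ref{CS2}). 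The second term is bounded by $C S \|\Grad \vc{v}_{Q_0}^\ep\|_{L^2(\Ome)}$ with $S := \|\mathrm{sym}\Grad\vc{v}\|_{L^2(\Ome)}$, and a direct calculation using (\ref{ap2}) gives $\|\Grad \vc{v}_{Q_0}^\ep\|_{L^2(\Ome)}^2 \leq C\ep^{n-1}\|Q_0\|_{L^2(0,1)}^2 + CS^2 + C\ep^2 G^2$, where $G := \|\Grad\vc{v}\|_{L^2(\Ome)}$. Combining with the right-hand side $\alpha G \|\Grad \vc{v}_{Q_0}^\ep\|_{L^2(\Ome)}$ of (\ref{CS4}) and absorbing the $\|Q_0\|_{L^2}$-terms on the right via Young's inequality, I would arrive at
\[
\ep^{n-1}\|Q_0\|_{L^2(0,1)}^2 \leq C_1 S^2 + C_2 \alpha^2 G^2 + C_3 \ep^2 G^2.
\]

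Closing the argument requires a companion bound $\ep^{n-1}\|a\|_{L^2(0,1)}^2 \leq C\bigl(S^2 + \ep^2 G^2\bigr)$; the orthogonality (\ref{CS4}) is of no direct use for $a$, since $\Grad \vc{v}_Q^\ep$ has vanishing last row. I would instead extract this bound from the boundary conditions (\ref{BC}), integrating the pointwise identity $\partial_z v_i(\xh,z) = a_i(z) + [\Grad\vc{v}-\mathbb{A}]_{in}(\xh,z)$ along a suitable family of $z$-curves adapted to the moving cross sections via the flow of a vector field of the type constructed in Lemma~\ref{rO1}. The endpoint Dirichlet condition $\vc{v}(\cdot,0)=\vc{v}(\cdot,1)=0$, combined with the lateral slip $\vc{v}\cdot\vc{n}=0$, then forces the mean $\int_0^1 a_i(z)\,\dz$ to be of size comparable to $\|\Grad\vc{v}-\mathbb{A}\|_{L^2(\Ome)}$; a Poincar\'e estimate for $a - \overline{a}$ on $[0,1]$, together with the $\|a'\|_{L^2(0,1)}$-control furnished by (\ref{ap2}), then produces the desired uniform bound on $\|a\|_{L^2(0,1)}$.

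Assembling the two block-wise bounds via
\[
G^2 \leq 2\|\mathbb{A}\|_{L^2(\Ome)}^2 + 2\|\Grad\vc{v} - \mathbb{A}\|_{L^2(\Ome)}^2 \leq C\ep^{n-1}\bigl(\|Q_0\|_{L^2(0,1)}^2 + \|a\|_{L^2(0,1)}^2\bigr) + CS^2
\]
and substituting the previous estimates yields $G^2 \leq C_1' S^2 + C_2'\alpha^2 G^2 + C_3' \ep^2 G^2$, which for $\ep$ sufficiently small rearranges to $(1-C_2'\alpha^2) G^2 \leq C S^2$ and hence to (\ref{korngood}). I expect the principal obstacle to be the uniform control of the off-diagonal block $a(z)$: unlike $Q_0$, it is invisible to the orthogonality condition (\ref{CS4}), and the bound must emerge from a delicate interplay of the endpoint Dirichlet condition (which constrains $\int_0^1 a$ through the approximation error) and the lateral slip condition (which links $v_n$ to the tangential horizontal trace of $\vc{v}$); the ball geometry (\ref{CS2}) and the precise structure of $\mathbb{A}$ from Theorem~\ref{TK2} both seem indispensable, in keeping with the role of the circular cross sections already visible in Example~\ref{example1}.
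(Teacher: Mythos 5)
Your overall architecture is sound — split $\mathbb{A}$ from Theorem~\ref{TK2} into the $(n-1)\times(n-1)$ block $Q_0$ and the off-diagonal column $a = (\mathbb{A}e_n)_{1,\dots,n-1}$, control $Q_0$ via the near-orthogonality (\ref{CS4}), and control $a$ by other means. But the argument you sketch for $a$ does not close, and that is exactly where the real difficulty of the theorem sits.

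You propose to control the mean $\bar a = \int_0^1 a$ by integrating $\partial_z v_i = a_i + [\Grad\vc{v}-\mathbb{A}]_{in}$ along flow curves and using the Dirichlet endpoints, and then to recover $\|a\|_{L^2(0,1)}$ from $\bar a$ and $\|\partial_z a\|_{L^2(0,1)}$ via a 1D Poincar\'e inequality. The problem is that (\ref{ap2}) only gives $\|\partial_z\mathbb{A}\|_{L^2(0,1)}^2 \le \frac{C}{\ep^2}\fint_\Ome|\mathrm{sym}\Grad\vc{v}|^2$, so this route produces $\|a\|_{L^2(0,1)}^2 \le \frac{C}{\ep^2}\fint_\Ome|\mathrm{sym}\Grad\vc{v}|^2 + C|\bar a|^2$ — off by the very factor $\ep^{-2}$ you were trying to eliminate. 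Your claimed bound $\ep^{n-1}\|a\|_{L^2}^2 \le C(S^2 + \ep^2 G^2)$ does not follow, and with it the whole assembly collapses. The paper's actual Step~2 is precisely the device that avoids this: instead of estimating $a$ via $a'$ alone, it writes $\int_0^1 |\mathbb{A}e_n\cdot e_i|^2$ through the Hilbert-space identity $\|a\|^2 \le \|a-b\|^2 + 2\langle a,b\rangle$ with $b = \partial_z\bigl(\int_{\ep\omega(z)}\vc{v}\cdot e_i\,\mathrm{d}\xh\bigr)$, controls $\|a-b\|^2$ via the Reynolds transport theorem and (\ref{ap1}), (\ref{ko1}), (\ref{ko2}), and handles $\langle a,b\rangle$ by an integration by parts in $z$ that pairs $\partial_z\mathbb{A}$ (size $\ep^{-1}S$) against the horizontal average of $\vc{v}\cdot e_i$ (size $\ep\,G$ by the Poincar\'e scaling from (\ref{cztery})), so that the $\ep^{-1}$ and $\ep$ cancel and one is left with a cross term $S\cdot G$ rather than $\ep^{-2}S^2$. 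That cross term is then absorbed into $G^2$ at the very end. Your sketch for $a$ has no mechanism producing this cancellation.

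Two smaller remarks. First, the paper does not expand (\ref{CS4}) term by term as you do; it uses the elementary geometric consequence of (\ref{CS4}), namely
\[
\int_{\Ome}|\Grad\vc{v}|^2\,\dx \le \frac{1}{1-\alpha^2}\int_{\Ome}|\Grad\vc{v}-\Grad\vc{v}_Q^\ep|^2\,\dx \qquad\forall Q\in\mathcal{I},
\]
which, with $Q=Q_0$ (the principal block of $\mathbb{A}$, after the same endpoint modification you mention), immediately cancels the $Q_0$-block and reduces everything to bounding the $a$-block plus controlled errors; this is cleaner than your Young-inequality bookkeeping. Second, your final rearrangement $(1-C_2'\alpha^2-C_3'\ep^2)G^2 \le C S^2$ only yields a conclusion for $\alpha^2 < 1/C_2'$ and $\ep$ small, not the stated estimate $\frac{C}{1-\alpha^2}$ valid for every $\alpha\in[0,1)$ and every $\ep$; this defect also traces back to bypassing the geometric inequality above.
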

\begin{proof}
{\bf 1.} The angle condition (\ref{CS4}) implies that:
\[
\int_{\Ome} |\Grad \vc{v} |^2 \  \dx \leq \frac{1}{1-\alpha^2} \int_{\Ome} |\Grad \vc{v} - \Grad
\vc{v}_Q^\ep|^2 \ \dx \qquad \ \mbox{for all}\  Q\in\mathcal{I}.
\]
Let $\mathbb{A}:[0,1]\to so(n)$ be as in Theorem \ref{TK2}. Note that,
by construction: $\mathbb{A}(z) = 0$ for $z<-\ep$ and $z>1+\ep$. Thus,
we can modify $\mathbb{A}$ on the intervals $[0,\ep]$ and $[1-\ep, 1]$,
 so that $\mathbb{A}(0) = \mathbb{A}(1)=0$ and (\ref{ap1}),
 (\ref{ap2}) still hold. Define $Q_0(z) = \mathbb{A}_{(n-1)\times (n-1)}(z) \in so(n-1)$ as
the principal minor of $\mathbb{A}(z)$ of dimension $(n-1)$. Then
$Q_0\in\mathcal{I}$ and {using the above we have:}
\begin{equation}\label{CS5}
\begin{split}
(1-\alpha^2)\fint_{\Ome}  |\Grad \vc{v}|^2 \ \dx & \leq { \fint_{\Ome} }|\Grad \vc{v} - \Grad
\vc{v}_{Q_0}^\ep|^2 \ \dx  \\ & \leq C \Big( \fint_{\Ome} |\Grad
\vc{v}(\xh, z) - \mathbb{A}(z)|^2\ \dx + \fint_{\Ome} |\ep\partial_z
Q_0|^2 + |\ep Q_0|^2 \ \dx \\ & \qquad\qquad\qquad\qquad
\qquad\qquad\qquad \quad + \fint_{\Ome} |\partial_z \vc{v}|^2 + |\Grad
( \vc{v} \cdot e_n ) |^2 \ \dx\Big) \\ & \leq C \Big( \fint_{\Ome} |\mathrm{sym}\Grad
\vc{v}|^2\ \dx + \ep^2 \int_0^1 |\partial_z \mathbb{A}|^2 +
|\mathbb{A}|^2 \ \dz + \fint_{\Ome} |\partial_z \vc{v}|^2 \ \dx\Big)
\\ & \leq C \Big( \fint_{\Ome} |\mathrm{sym}\Grad
\vc{v}|^2\ \dx + \fint_{\Ome} |\partial_z \vc{v}|^2 \ \dx\Big),
\end{split}
\end{equation}
where we applied Theorem \ref{TK2}. We now observe that the last term above satisfies:
\[
\fint_{\Ome} |\partial_z \vc{v}|^2 \ \dx \leq C \Big( \fint_{\Ome} |\partial_z \vc{v} - \mathbb{A}e_n|^2\ \dx +
\fint_{\Ome} |\mathbb{A}e_n|^2\ \dx\Big),
\]
and thus (\ref{CS5}) yields:
\begin{equation}\label{CS6}
\fint_{\Ome} |\Grad \vc{v}|^2 \ \dx \leq C \Big( \fint_{\Ome} |\mathrm{sym}\Grad \vc{v}|^2\ \dx +
\int_0^1 |\mathbb{A}e_n|^2\ \dz \Big).
\end{equation}

\smallskip

{\bf 2.} For each integral term of the form $\int_0^1 (
\mathbb{A}e_n \cdot e_i )^2\ \dz,$ $i=1\ldots (n-1)$, we recall the Hilbert
space identity $\|a\|^2 + \|b\|^2 = \|a-b\|^2 + 2  a \cdot b $
to estimate:
\begin{equation}\label{CS7}
\begin{split}
\int_0^1 ( \mathbb{A}e_n \cdot e_i )^2\ \dz  \leq \frac{C}{\ep^{2(n-1)}}  \Bigg(
\int_0^1&\Big|\partial_z\big(\int_{\ep\omega(z)} ( \vc{v} \cdot e_i ) \ \mathrm{d}\xh\big) -
\int_{\ep\omega(z)} ( \mathbb{A}e_n \cdot e_i ) \
\mathrm{d}\xh\Big|^2\ \dz \\ & +
\int_0^1 \big( \int_{\ep\omega(z)} ( \mathbb{A}e_n \cdot e_i ) \ \mathrm{d}\xh\big)
\partial_z \big(\int_{\ep\omega(z)} ( \vc{v} \cdot e_i ) \ \mathrm{d}\xh\big) \ \dz\Bigg).
\end{split}
\end{equation}
Using Reynolds transport theorem, we find the derivative:
\[
\partial_z \big(\int_{\ep\omega_h(z)} ( \vc{v} \cdot e_i ) \
\mathrm{d}\xh\big) = \int_{\ep\omega_h(z)} ( \partial_z\vc{v} \cdot 
e_i ) + \mbox{div}\Big( ( \vc{v}\cdot  e_i ) \partial_z\phi^\ep(\xh,z)\Big) \ \mathrm{d}\xh
\]
in terms of the derivative $\partial_z\phi^\ep$ of the flow of
diffeomorphisms $\phi^\ep(\cdot, z):\ep\omega_h(0)\to \mathbb{R}^{n-1}$ such that
$\phi^\ep(\ep\omega(0), z) = \ep\omega_h(z)$. In fact, we can take $\phi^\ep(\xh, z)
= \ep\phi^1(\frac{1}{\ep}\xh, z)$, whereas the simple form of the
cross sections in (\ref{CS2}) ensures that:
\[
\phi^\ep(\xh, z) = \ep\partial_z X(z) +\frac{\partial_z r(z)}{r(0)}\xh
\qquad \forall \xh\in B(\ep X(z), \ep r(0)), \quad z\in [0,1],
\]
so that $\partial_z\phi^\ep(\xh, z) = \ep\partial^2_zX +\frac{\partial^2_zr(z)}{r(0)}\xh$.

Thus, we may bound the first term in the right hand side of (\ref{CS7}) by:
\begin{equation}\label{CS8}
\begin{split}
\frac{1}{\ep^{2(n-1)}} \int_0^1\Big|\partial_z&\big(\int_{\ep\omega_h(z)} ( \vc{v} \cdot e_i ) \ \mathrm{d}\xh\big) -
\int_{\ep\omega_h(z)} ( \mathbb{A}e_n \cdot e_i ) \
\mathrm{d}\xh\Big|^2\ \dz \\ & \leq C
\int_0^1 \left|\fint_{\ep\omega_h(z)} ( \partial_z\vc{v} - \mathbb{A}e_n ) \cdot
e_i  + \mbox{div}\Big( ( \vc{v}\cdot e_i ) \partial_z\phi^\ep(\xh,z)\Big) \ \mathrm{d}\xh
\right|^2 \ \dz \\ & \leq C\Big(\fint_{\Ome} |\partial_z\vc{v} - \mathbb{A}e_n|^2 \ \dx + \ep^2 \fint_{\Ome}
|\Grad ( \vc{v} \cdot e_i ) |^2 \ \dx + \fint_{\Ome} ( \vc{v} \cdot e_i )^2 \ \dx
\Big) \\ & \leq C \fint_{\Ome} |\mathrm{sym}\Grad \vc{v}|^2\ \dx,
\end{split}
\end{equation}
where the last inequality above follows from Theorem \ref{TK2} and Theorem \ref{TKP}.
For the second term in (\ref{CS7}), we integrate by parts to get:
\begin{equation}\label{CS9}
\begin{split}
\frac{1}{\ep^{n-1}} \Big |\int_0^1 ( \mathbb{A}e_n \cdot e_i )
\partial_z &\big(\int_{\ep\omega(z)} ( \vc{v} \cdot e_i ) \ \mathrm{d}\xh\big) \ \dz\Big|
 \leq C \left|\int_0^1 (\partial_z ( \mathbb{A}e_n \cdot e_i ) )
\big(\fint_{\ep\omega_h(z)} ( \vc{v} \cdot e_i ) \ \mathrm{d}\xh\big) \ \dz\right |
\\ & \leq \left(\int_0^1 |\partial_z \mathbb{A}|^2 \ \dz\right)^{1/2}
\left(\fint_{\Ome} ( \vc{v} \cdot e_i ) \ \dx\right)^{1/2} \\ & \leq C
\left(\fint_{\Ome} |\mathrm{sym}\Grad \vc{v}|^2\ \dx\right)^{1/2} \left(\fint_{\Ome} |\Grad \vc{v}|^2\ \dx\right)^{1/2},
\end{split}
\end{equation}
using Theorem \ref{TK2} and (\ref{cztery}).

\smallskip

{\bf 3.} Finally, (\ref{CS6}), (\ref{CS7}), (\ref{CS8}) and (\ref{CS9}) imply:
\[
\fint_{\Ome}  |\Grad \vc{v}|^2\ \dx \leq \frac{C}{1-\alpha^2} \fint_{\Ome} |\mathrm{sym}\Grad \vc{v}|^2\ \dx +
\frac{C}{1-\alpha^2} \left(\fint_{\Ome} |\mathrm{sym}\Grad \vc{v}|^2\
  \dx\right)^{1/2} \left(\fint_{\Ome} |\Grad \vc{v}|^2\ \dx\right)^{1/2},
\]
which yields (\ref{korngood}) and achieves the proof.
\end{proof}

\begin{Remark}
It would be interesting to prove an optimal Korn's inequality in
the spirit of Theorem \ref{opti}, for the general case of thin channels as in
(\ref{CS}). A natural candidate for the functional kernel
$\mathcal{I}$ in (\ref{CS4}) is then the following space:
\[
\begin{split}
\mathcal{I} = \Big\{(Q, X)\in W^{1,2}([0,1]; & so(n-1)\times \mathbb{R}^{n-1}) \
  \Big| \ (Q, X)(0) = (Q,X)(1) = 0 \quad \mbox{and}\\ &
\xh\mapsto Q(z)\xh + X(z) \mbox{ is tangent on } \partial\omega(z),
\mbox{ for a.a. } z\in[0,1] \Big\}.
\end{split}
\]
Each element $(Q,X)\in\mathcal{I}$ generates a vector field
$\vc{v}_{Q,X}^\ep \in W^{1,2}(\Ome; \mathbb{R}^n)$ satisfying (\ref{BC}), where we set:
$$\vc{v}_{Q,X}^\ep (\xh, z) = \Big(Q(z)\xh + \ep X(z), 0\Big).$$

Note that if $\omega(z)$ has no rotational symmetry, then automatically $(Q, X)(z) = 0$.
Further, observe that every closed set $\mathcal{C}\subset[0,1]$ is the locus of
rotationally symmetric sections in some smooth channel $\Omega_1$. Namely, let
$r:[0,1]\to R$ be a smooth nonnegative function such that $r^{-1}(0) =
\mathcal{C}$. Let $\omega_0$ be a smooth domain with no rotational
symmetry, satisfying $B(0,1)\subset\omega_0\subset B(0,2)\subset \mathbb{R}^{n-1}$. Define:
$$\omega(z) = B(0,1) \cup \Big\{\Big(1+ {r}(z)(|\xh|-1)\Big)\xh \ \Big| \ \xh\in
\omega_0, \ |\xh|\geq 1\Big\}.$$
Then $\omega(z)$ equals $B(0,1)$ for all $z\in \mathcal{C}$, and otherwise
$\omega(z)$ has no rotational symmetry, so that:
\begin{equation}\label{korn3}
\int_{\omega(z)} |\Grad \vc{u}|^2 \ \dx \leq C \int_{\omega(z)} |\mathrm{sym}\Grad \vc{u}|^2 \ \dx,
\end{equation}
valid for all $\vc{u}\in W^{1,2}(\omega(z); \mathbb{R}^{n-1})$
tangent on $\partial\omega(z)$ and all $z\not\in \mathcal{C}$,
with a uniform $C$.

Observe now that taking the set $\mathcal{C}$ nowhere dense implies that $\mathcal{I}=\{0\}$,
indicating that (\ref{korngood}) holds for all $\vc{v}$ satisfying
(\ref{BC}) (here $\alpha=0$). On the other hand, if $\mathcal{C}$ has positive
measure (as valid for the ``fattened'' Cantor set), then Korn's inequality (\ref{korn3}) still fails at all
$z\in \mathcal{C}$.
\end{Remark}

\def\cprime{$'$} \def\ocirc#1{\ifmmode\setbox0=\hbox{$#1$}\dimen0=\ht0
  \advance\dimen0 by1pt\rlap{\hbox to\wd0{\hss\raise\dimen0
  \hbox{\hskip.2em$\scriptscriptstyle\circ$}\hss}}#1\else {\accent"17 #1}\fi}

\end{document}